 \numberwithin{equation}{section}
\newtheorem{theorem}{~Theorem}[section]
\newtheorem{lemma}[theorem]{~Lemma}
\newtheorem{proposizione}[theorem]{~Proposition}
\newtheorem{corollario}[theorem]{~Corollary}
\newtheorem{osservazione}[theorem]{~Remark}
\newtheorem{definition}[theorem]{~Definition}
\newtheorem{ass}[theorem]{~Assumption}
\newtheorem{rem}[theorem]{~Remark}
\newcommand{\iS}{\mbox{$\displaystyle{\int^*_{S}}$}}
\newcommand{\vuoto}{{\rm \O}}
\newcommand{\erre}{\mathbb{R}}
\newcommand{\Bs}{\mbox{$\mathcal B$~}}
\newcommand{\As}{\mbox{$\mathcal A$}}
\newcommand{\Hs}{\mbox{$\mathcal H$~}}
\newcommand{\Es}{\mbox{$\mathcal E$}}
\newcommand{\WEs}{\mbox{$W(\Es)$}}
\newcommand{\CEs}{\mbox{$C(\Es)$}}
\newcommand{\LCEs}{\mbox{$LC(\Es)$}}
\begin{document}
\begin{frontmatter}
\title{\bf A special class of fuzzy measures: Choquet integral and applications}
\author[add1]{Domenico Candeloro}\ead{domenico.candeloro@unipg.it}
\author[add2]{Radko Mesiar\corref{cor2}} \ead{mesiar@math.sk}
\cortext[cor2]{corresponding author address: Slovak University of Technology (STU) Bratislava Faculty of Civil Engineering, Department of Mathematics Radlinského 11, 810 05 Bratislava, (Slovakia)}
\author[add1]{Anna Rita Sambucini} \ead{anna.sambucini@unipg.it} 
%orcid ID: 0000-0003-0161-8729
%===============================
\address[add1]{Department of Mathematics and Computer Sciences, University of Perugia, Via Vanvitelli 1, 06123 Perugia (Italy)}
\address[add2]{Slovak University of Technology (STU) Bratislava Faculty of Civil Engineering, Department of Mathematics Radlinského 11, 810 05 Bratislava, (Slovakia)}
\date{today}
\begin{abstract}
Core of an economy and Walras equilibria are considered  for a product space $X \times [0,1]$ using the Choquet integral with respect to a fuzzy measure.
\end{abstract}
\begin{keyword} fuzzy measure 
 \sep sectional function \sep Choquet integral
 \sep  Walrasian equilibria \sep Core of an economy \\
\MSC[2010]  28B20    \sep  26E25 \sep 46B20 \sep 54C60
\end{keyword}
\end{frontmatter}
%=============================
\section{Introduction}\label{introduzione} %\mg{introduzione}
The Choquet integral, introduced by G. Choquet in 1953 (\cite{Choquet}), gives a method to integrate functions with respect to  non necessarily additive measures such as capacities or, more generally, fuzzy measures (\cite{BM2000,BM2000a,BMV2002,bs9596,bs97,JKK,JK,LIMP,LIMP2,mesiar,MuSu,pap1}).
Besides its initial applications in potential theory and statistical mechanics it became a useful tool to deal with uncertainty in imprecise probability theory, in decision theory  and in the study of cooperative games (\cite{cv2013,cpv2014,z,m-altri,nt}).
The Choquet integral has applications also in finance, economics and insurance.
One of the central problems in Mathematical Economics is the search of equilibria for the model;
in the finitely additive framework results are given in \cite{AM1,AM4,AR,B,BG,BDG1,BDG13,V}.
In this research, we assume that the space of agents is decomposed into a large number of {\em sections}, each of 
which is an authonomous economic model, but coalitions can be created also among members of different sections, according with some rules.
\\
The mathematical model is a {\em product space} $X^*:=X\times [0,1]$, where the {\em sections} are the sets 
$X\times \{y\}$, as $y$ ranges in $[0,1]$. The set $X$ represents the {\em typical} section of agents,
and is endowed with a $\sigma$-algebra $\As$, 
while in the $[0,1]$ space the usual Lebesgue $\sigma$-algebra $\Bs$ and measure 
$\lambda$ are fixed. 
In each section $X\times \{y\}$ the obvious $\sigma$-algebra $\As\times\{y\}$ is considered, 
with a fuzzy measure $\mu_y$ defined there.  
\\
The {\em coalitions} are all the sets of the product $\sigma$-algebra $\Hs$ generated by $\As$ and the Borel $\sigma$-algebra $\Bs$,
and the fuzzy measure $m:\Hs\to \erre^+_0$ is defined by integrating the measures $\mu_y$ with respect to the 
Lebesgue measure in $[0,1]$.
This model includes 
 the case in which the {\em sections} are just a finite number of sets, $E_1,...,E_r$, 
and $\mu$ is additive along them. Indeed, in that case it will suffice to define $X$ as the union of these sets, and to decompose $[0,1]$ into $r$ subintervals $J_i$ (such that $\lambda(J_i)=\mu(E_i)$),  in each point $y$ of which the measure  $\mu_y$ is null outside the
set $E_i\times \{y\}$ and coincides with $\dfrac{\mu}{\mu(E_i)}$ in the measurable subsets of $E_i\times\{y\}$.
%=========================================
%====================
\section{Preliminaries and definitions}\label{preliminari}
%\mg{preliminari}
In $\erre ^n$ we shall denote by $\mathbb{R}_+^n$ the positive orthant, and by $(\mathbb{R}_+^n)^{\circ}$
 its interior.
Also we shall denote by $\leq $ the usual order between numbers, and by \underline{$\ll $} the usual
partial order between vectors in $\erre ^n$.
Let $(X, \mathcal{A})$ be a measurable space.
\begin{definition}\rm (Murofushi and Sugeno \cite{MuSu})
A fuzzy measure on a measurable space $(X, \mathcal{A})$  is a set function 
$\mu: \mathcal{A} \rightarrow \mathbb{R}_0^+$ with the properties:
\begin{itemize}
\item $\mu(\emptyset) = 0$; \hskip.3cm  $\mu (X) < +\infty$;
\item if $A \subset B$, then 
$\mu(A) \leq \mu(B)$  (monotonicity).
\end{itemize}
A fuzzy measure $\mu$ is {\em subadditive} if 
$\mu(A\cup B)\leq \mu(A)+\mu(B)$
for all elements $A,B$ from $\As$. A subadditive fuzzy measure will be also called a {\em submeasure}.
\end{definition}
Submeasures are also called capacities, for an overview of the topic see for example \cite{pap1, bb2017}.\\

We now recall the concept of a {\em semiconvex} submeasure: 
\begin{definition}\label{CM2.1} \rm (See also  
 \cite[Lemma 2.1]{CM1978})
%\mg{CM2.1}
\rm If $\mu :\mathcal{A} \rightarrow [0,1]$ is a fuzzy
submeasure, we say that it is {\em semiconvex} if
 for every $A\in \mathcal{A}$ there exists a family of subsets of $A$,
$(A_t)_{t\in [0,1]}\subset \mathcal{A} $ such that
\begin{description}
\item[(\ref{CM2.1}.i)] $A_0 = \vuoto,~ \, A_1 = A$;
\item[(\ref{CM2.1}.ii)] $\mu (A_t) = t\mu (A)$;
\item[(\ref{CM2.1}.iii)] for $t < t'$, there hold $A_t \subset A_{t'}$ and $\mu(A_{t'} \setminus A_t) = (t' -t) \mu(A)$.
\end{description}
\end{definition}

\begin{definition}\label{filtrazioneastratta}
%\mg{filtrazioneastratta}
\rm
Given a fuzzy measure $\mu: \mathcal{A} \rightarrow \mathbb{R}_0^+$, (not necessarily sub-additive), we say that it is {\em filtering} if, for every element $A\in \mathcal{A}$ there exists an increasing family
$(A_t)_{t\in [0,1]}$ of measurable subsets of $A$, such that
{\bf (\ref{CM2.1}.i)-(\ref{CM2.1}.iiii)} above hold true.
\end{definition}

%{\color{blue} 
\begin{rem}\rm \label{rem-filtr}
%\mg{rem-filtr}
In this case, the range of $\mu_A$, namely of the measure $\mu$ restricted to $A$   is $[0,\mu(A)]$,
for every measurable $A\subset X$.
Moreover, given a family $\{\mu_y, y\in Y\}$ of fuzzy measures on $\mathcal{A}$, we say that they are {\em uniformly filtering} if for each element $A\in \mathcal{A}$ the same filtering family can be found, for all $\mu_y$.

%%In realta', tutto quello che serve ai fini della convessita' e' che esista una filtrazione uniforme per il solo insieme $X$. Comunque per ora lascio cosi'. Questa proprieta' qui riportata e' soddisfatta nel tuo caso con la condizione A1). Infatti, una volta trovata la filtrazione per $A\cap E_i$, basta unire i pezzi e usare l'additivita' per ottenere una filtrazione che vada bene per tutte le r misure. Forse non sono abbastanza chiaro, nel caso ne riparliamo. Mimmo, 02022014
A nontrivial example can be given as follows: Let $X_0=[0,1]$, and assume that $\mathcal{A}$ is the family of all subsets of $X_0$.
 It is well-known that there exist additive positive measures on $X_0$ (not $\sigma$-additive) extending the Lebesgue measure $\lambda$ to $\mathcal{A}$:  see e.g. \cite[Theorem 1.3]{CM1978} and related bibliography \cite{es,na}; similar questions were discussed also in  \cite{colreg}. Denote by $\mu_0$
 any of these measures, and set, for every $y\in ]0,1]$, $\mu_y(A)=\mu_0(A)^y$: clearly, $\mu_y$ is a fuzzy measure on $X_0$,  not additive in general and it is filtering, because of continuity. Now, let $X=X_0\times ]0,1]$ and define, for each $E\subset X$
$$\mu^*_y(E)=\mu_y(E_y),$$
where $E_y:=\{x\in X_0:(x,y)\in E)\}$. Clearly, for each $t\in [0,1 ]$, each $y$ and every $E$ there exists a subset 
$E'_{t,y}\subset E_y$ such that $\mu_y(E'_{t,y})=t\mu_y(E_y)=t\mu_y^*(E)$. Thus, setting 
$E^*_t=\bigcup_{y\in ]0,1]}E'_{t_y}$
we get
$$\mu^*_y(E^*_t)=\mu_y(E'_{t,y})=t\mu_y^*(E),$$
as requested for uniform filtering.\\
We might be wondering if any classical $\sigma$-additive measure $\mu$ with range $[0,\mu(X)]$ has a filtering family satisfying  (\ref{CM2.1}.iiii). The answer is negative, since 
this condition for scalar measure is strictly related to the notion of continuity 
(\cite[Definition 1.2]{CM1979}:  for every $\varepsilon > 0$ there exists a finite partition of $X : E_1, E_2, \ldots E_k$ 
such that $\mu(E_i) \leq \varepsilon$ for every $i \leq k$)
and it does not follow  in general from the additivity of the measure. In fact, if we consider $X = \mathbb{N}$ and the measure $\mu: \mathcal{P}(\mathbb{N}) \to [0,1]$ defined by $\mu(E) = \sum_{n =1}^{\infty} \dfrac{1_E (n)}{2^n}$;  this measure
 is $\sigma$-additive with arcwise-connected range ($R(\mu) = [0,1]$) but it is not continuous in the sense of \cite[Definition 1.2]{CM1979} and so the condition (\ref{CM2.1}.iiii) is not fulfilled. If the target space is infinite dimensional the situation 
is even worse, in fact the continuity does not imply the semiconvexity and then the convexity of the range (\cite{CM1979}).
\end{rem}

%=========================================

\section{The Choquet integral  and its properties}\label{choquet} %\mg{choquet}

Let $(X, \mathcal{A}, \mu)$ be a 
fuzzy measure space. 
\begin{definition}\label{def-ch} \rm A  function $f:X \rightarrow \erre^+_0$  is said to be {\em  measurable}
 if the set $\{x\in X | f(x) > t\}$ is in $\mathcal{A }$ for every $t > 0$. Any set of that type will be often denoted as $[f>t]$.
The  \it Choquet integral \rm of a measurable function $f$ is defined by 
\[ \int f d\mu:= \int_0^{\infty} \mu( [f > t]) dt;\]
where the latter integral is in the Riemann sense.
We say that  $f \in L^1_C(\mu)$ if and only if $f$ is measurable and  
$\int f d\mu < \infty$.
\end{definition}
\noindent
This integral fulfills the following properties (\cite[Proposition 5.1  and Chapter 11]{D})
\begin{description}
\item[(\ref{def-ch}.i)] $\int 1_A~ d\mu= \mu(A)$;
\item[(\ref{def-ch}.ii)] $\int c f d\mu= c \int f d\mu$ for $c \geq 0$;
\item[(\ref{def-ch}.iii)] if $f \leq g$ then $\int f d\mu \leq \int g d\mu$;
\item[(\ref{def-ch}.iv)] $\int (f + c) d\mu= \int f d\mu+ c\mu(X)$ for every $c \in \erre^+$.
\item[(\ref{def-ch}.v)] if $\mu$ is subadditive then
% and $\mu (f > -\infty) = \mu (g > -\infty) = \mu(X),$ then
\[\int (f+g) d\mu\leq \int f d\mu+ \int g d\mu; \hskip.5cm
\mbox{(\cite[Theorem 6.3]{D}) }
\]
\item[(\ref{def-ch}.vi)] 
if $f,g$ are {\em comonotonic}, namely there is no pair $x, y \in X$ such that
$f(x) > f(y)$ and $g(x) < g(y)$,  then 
\begin{align*}
\int (f+g) d\mu = \int f d\mu+ \int g d\mu; \hskip.5cm
\mbox{(comonotonic additivity) }
\end{align*}
\item[(\ref{def-ch}.vii)] for every $c > 0$ it is
\begin{align*}
\int f d\mu &= \int \min\{f,c\} d\mu+ \int (f- \min\{f,c\}) d\mu; \\
&\mbox{(horizontal additivity) }
\end{align*}
\item[(\ref{def-ch}.viii)] for every non negative and $\mathcal{A}$-measurable function $f$ it is
\[ \int_A f d\mu := 
\int f d\mu_A 
 %\int_0^{\infty} \mu( [f 1_A > t]) dt 
= \int f 1_A d\mu\]
(see \cite[chapter 11]{D}); for arbitrary $f$ this last equality fails, as showed in \cite[Example 11.1]{D}.
%where the measurability condition is used in the first equality, the non negativity for the last one.
\end{description}

There is a huge literature concerning (\ref{def-ch}.vi) and its consequences; an interesting result on additivity is contained 
in \cite{mes-sip} where the \v{S}ipo\v{s} and the Choquet integrals are compared and 
the additivity of the integrals are examined on some subspaces.\\

Now, we shall introduce a class of fuzzy measures, that in some sense can be considered as {\em averages} 
of other fuzzy measures on the space $X$. In particular, we shall assume the following, which will be kept for all the sequel.
\begin{definition}\label{evoluzione}\rm
%\mg{evoluzione}
Let $X^*:=X\times [0,1]$, and in $X^*$ let $\mathcal H$ be the $\sigma$-algebra $\mathcal{A}\times \mathcal{B}$,
 i.e. the product $\sigma$-algebra obtained by $\mathcal{A}$ and the Borel $\sigma$-algebra $\mathcal{B}$ in $[0,1]$. 
We say that a fuzzy measure $m:\mathcal H\to \erre^+_0$ is {\em decomposable} if, 
for every real number $y\in [0,1]$ there exists a non-trivial fuzzy measure $\mu_y:\mathcal{A}\to \erre^+_0$, in such 
a way that the measures $\mu_y$ turn out to be equibounded 
and that
$$m(H)=\int_0^1 \mu_y(H_y)dy$$
holds true, for all $H\in \mathcal{H}$.
(Here the set $H_y$ is the $y$-section of $H$, and we implicitly assume that the mapping $y\mapsto \mu_y(H_y)$ is a
 measurable map, for all $H$).
\end{definition}

From now on the measure $m$ will be decomposable.
\begin{definition}\label{sezional}\rm
%\mg{sezional}
Let $X^*:=X\times [0,1]$ be as above, and $f:X^*\to \erre^+_0$ be any mapping. We say that $f$ is {\em sectional}
if there exists a measurable mapping $\varphi:[0,1]\to \erre^+_0$ such that 
$$f(x,y)=\varphi(y),$$
for all $x$ and all $y$.
If this is the case, we say that $\varphi$ is the {\em section function} of $f$. Usually, when this is the case, we shall also write $f(y)$ rather than $f(x,y)$, thus identifying $f$ and $\varphi$.
\end{definition}

A kind of Fubini Theorem can be deduced for an arbitrary non-negative integrable mapping $f$, asserting that the integral
 of $f$ is obtained as an iterated one.
We first prove a technical result of joint measurability for real-valued functions.
\begin{lemma}\label{joint}
%\mg{joint}
Let us assume that $g:[a,b]\times [c,d]\to \erre$ is any mapping, satisfying the following conditions:
\begin{description}
\item[(\ref{joint}.i)] $g(t, \cdot)$ is measurable for all $t\in [a,b]$;
\item[(\ref{joint}.ii)]  there exists a finite measurable partition $\{E_j,j=1...k\}$ of the interval $[a,b]$ such that
 $g(t,y)=g(t',y)$ for all $t,t'\in E_j$ and all $y$.
\end{description}
Then $g$ is jointly measurable in $(t,y)$.
\end{lemma}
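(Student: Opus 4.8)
The plan is to exploit condition (\ref{joint}.ii), which says that on each strip $E_j\times[c,d]$ the function $g$ does not depend on the first variable at all. Concretely, I would fix for each $j$ a representative $t_j\in E_j$ and set $g_j(y):=g(t_j,y)$. By (\ref{joint}.ii) we then have $g(t,y)=g_j(y)$ for every $(t,y)\in E_j\times[c,d]$, and since $\{E_j,\,j=1\dots k\}$ partitions $[a,b]$ this yields the global representation
$$g(t,y)=\sum_{j=1}^k 1_{E_j}(t)\,g_j(y).$$
The whole proof then reduces to showing that each summand is jointly measurable and invoking closure of measurable functions under finite sums.

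Next I would verify the two measurability ingredients separately. Each $g_j$ is a measurable function of $y$: this is precisely hypothesis (\ref{joint}.i) applied at the fixed point $t_j$. Each indicator $1_{E_j}$ is a measurable function of $t$, because the partition is assumed measurable. Neither of these requires anything beyond the stated hypotheses.

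Then I would lift these facts to the product space. The map $(t,y)\mapsto 1_{E_j}(t)$ is the indicator of the measurable rectangle $E_j\times[c,d]$, hence measurable for the product $\sigma$-algebra; the map $(t,y)\mapsto g_j(y)$ is the composition of $g_j$ with the projection onto the second coordinate, which is measurable, so it is measurable as well. The product of two real-valued measurable functions is measurable, whence $(t,y)\mapsto 1_{E_j}(t)\,g_j(y)$ is jointly measurable. Summing over the finitely many indices $j$ gives joint measurability of $g$.

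The only point where something genuinely has to be said is the passage from separate measurability of the two factors to joint measurability of their product; everything else is bookkeeping. Here the finiteness of the partition in (\ref{joint}.ii) is essential: it turns $g$ into a finite sum of ``tensor'' terms $1_{E_j}\cdot g_j$, for which joint measurability is immediate, whereas for a genuine two-variable function mere separate measurability in each variable would not suffice. I therefore expect no serious obstacle beyond making this reduction explicit.
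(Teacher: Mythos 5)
Your proof is correct, but it takes a genuinely different route from the paper's. You decompose the \emph{function}: picking representatives $t_j\in E_j$, you write $g(t,y)=\sum_{j=1}^k 1_{E_j}(t)\,g_j(y)$ with $g_j(y)=g(t_j,y)$, and then invoke standard closure properties --- indicators of measurable rectangles and compositions with the projection onto the second coordinate are jointly measurable, and products and finite sums of measurable functions are measurable. The paper instead decomposes the \emph{superlevel sets}: it fixes a threshold $\tau$, observes that each section $A(\tau)_y=\{t:g(t,y)>\tau\}$ belongs to the finite algebra generated by $E_1,\dots,E_k$, groups the $y$'s according to which algebra element $F_h$ the section equals (each set $Y_h=\{y:A(\tau)_y=F_h\}$ being measurable as a finite intersection of sets of the form $\{y:g(t_i,y)>\tau\}$ and their complements, using hypothesis (\ref{joint}.i)), and concludes from $A(\tau)=\bigcup_h F_h\times Y_h$ that every superlevel set is a finite union of measurable rectangles. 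Both arguments hinge on the same key step --- choosing the representatives $t_j$, which is where (\ref{joint}.ii) enters --- but yours delegates the rest to the standard algebra of measurable functions, while the paper verifies the rectangle structure of the level sets by hand; in effect the paper re-proves, in this special case, the closure properties you cite, so your version is shorter at the cost of being less self-contained. A minor dividend of your route: it makes no sign assumption on $g$, whereas the paper's proof as written only treats positive thresholds $\tau$ (harmless for its intended application, where $g\geq 0$, and trivially fixable in general).
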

\begin{proof}
For each index $j$ from $1$ to $k$ choose arbitrarily a point $t_j\in E_j$.
For each positive $\tau$, set
$$A(\tau):=\{(t,y):g(t,y)>\tau\}:$$
for each fixed $y$, the $y$-section $A(\tau)_y$ is an element of the finite algebra in $[a,b]$ generated by the sets $E_j$. 
\\
Now, denoting by $F_1,...F_K$ the elements of this algebra, and 
 setting $Y_h:=\{y: A(\tau)_y=F_h\}$, $h=1,...,K$, we can see that
$Y_h$ is measurable since it is a finite intersections of sets of the type $\{y\in [c,d]:g(t_i,y)>\tau\}$ for suitable
values of $i$ and of the opposite type $\{y\in [c,d]:g(t_i,y)\leq \tau\}$ for the other indexes $i$, and in turn
 these sets are measurable since the mapping $g$ is separately measurable for each fixed $t$.
\\
 Finally, the formula
$$A(\tau)=\bigcup_{h=1}^k F_h\times Y_h$$
shows measurability of the set $A(\tau)$.
\end{proof}

\begin{theorem}\label{fubini2}
%\mg{fubini2}
Let $f:X^*\to \erre^+_0$ be any integrable map. Then we have
$$\int_{X^*}fdm=\int_0^1\left(\int_X f(x,y)d\mu_y(x) \right)dy.$$
\end{theorem}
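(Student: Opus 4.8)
The plan is to reduce everything to the classical Tonelli theorem for two iterated Lebesgue integrals on $[0,\infty)\times[0,1]$, the only genuine work being a joint-measurability statement. Starting from the definition of the Choquet integral and then invoking decomposability of $m$, I would write
$$\int_{X^*}f\,dm=\int_0^\infty m([f>t])\,dt=\int_0^\infty\left(\int_0^1\mu_y([f>t]_y)\,dy\right)dt,$$
and observe that the $y$-section of the super-level set satisfies $[f>t]_y=\{x:f(x,y)>t\}$, so that the integrand is exactly $\psi(t,y):=\mu_y(\{x:f(x,y)>t\})$. The goal then becomes the interchange
$$\int_0^\infty\left(\int_0^1\psi(t,y)\,dy\right)dt=\int_0^1\left(\int_0^\infty\psi(t,y)\,dt\right)dy,$$
because the inner integral on the right is, by the very definition of the Choquet integral on $(X,\mathcal A,\mu_y)$, equal to $\int_X f(x,y)\,d\mu_y(x)$.

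Before Tonelli can be applied I must show that $\psi$ is jointly measurable in $(t,y)$, and here I can rely only on two weak facts: for each fixed $t$ the map $y\mapsto\psi(t,y)$ is measurable (the standing assumption in Definition \ref{evoluzione}, applied to $H=[f>t]$), and for each fixed $y$ the map $t\mapsto\psi(t,y)$ is non-increasing, hence Borel, by monotonicity of $\mu_y$. Separate measurability is of course not enough, and since the $\mu_y$ are merely fuzzy measures I cannot lean on any additivity. This is precisely where Lemma \ref{joint} enters: on each strip $[0,N]\times[0,1]$ I would approximate $\psi$ by the step functions $\psi_n(t,y):=\psi(\lceil nt\rceil/n,\,y)$, which are constant in $t$ on the finite partition $\{[(j-1)/n,j/n)\}$ of $[0,N]$ — a partition independent of $y$ — and measurable in $y$ at each level $j/n$. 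Thus each $\psi_n$ satisfies the hypotheses of Lemma \ref{joint} and is jointly measurable, and its pointwise limit as $n\to\infty$ is jointly measurable on $[0,N]\times[0,1]$, hence on the whole half-strip.

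I expect the delicate point to be exactly this limit. Because a fuzzy measure need not be continuous from below, $t\mapsto\psi(t,y)$ need not be right-continuous, so $\psi_n(t,y)$ converges not to $\psi(t,y)$ but to the right limit $\psi(t^+,y)$. The resolution is that a monotone function differs from its right limit on an at most countable set, so for each fixed $y$ one has $\int_0^\infty\psi(t^+,y)\,dt=\int_0^\infty\psi(t,y)\,dt$, and — using monotone convergence to commute the right limit with the $y$-integral — one also recovers $\int_0^1\psi(t^+,y)\,dy=m([f>t])$ for all but countably many $t$. With joint measurability of $\psi(\cdot^+,\cdot)$ secured, Tonelli applies to this nonnegative function, the swap above goes through, and reading off the two iterated integrals yields $\int_{X^*}f\,dm$ on one side and $\int_0^1\big(\int_X f(x,y)\,d\mu_y(x)\big)\,dy$ on the other.

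An alternative, slightly cleaner route would first prove the identity for simple functions $f$, where $\psi(\cdot,y)$ is genuinely a step function and Lemma \ref{joint} applies verbatim, and then pass to general $f$ by monotone approximation; but that route silently requires continuity from below of $m$, so I would prefer the level-set argument above, which sidesteps it. In either case the main obstacle is the same, namely establishing joint measurability of $\psi$ with nothing but monotonicity of the $\mu_y$ to work with, and this is what makes Lemma \ref{joint} the technical heart of the proof.
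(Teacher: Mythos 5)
Your proposal is correct and takes essentially the same route as the paper's own proof: both reduce the claim to classical Fubini--Tonelli for the sectional distribution function $\psi(t,y)=\mu_y(\{x\in X:f(x,y)>t\})$, derive its joint measurability from Lemma \ref{joint} applied to $t$-step approximations built on partitions of the $t$-axis independent of $y$, and absorb the mismatch between $\psi$ and its approximants at the (countably many, for each fixed $y$) discontinuities of the monotone map $t\mapsto\psi(t,y)$. The differences are organizational rather than substantive --- the paper runs a two-sided sandwich $g_n\le g\le g^n$ on the bounded strip $[0,M]\times[0,1]$ and treats unbounded $f$ by truncation $f\wedge n$, whereas you use a one-sided approximation identifying the right-limit modification and apply Tonelli directly on $[0,\infty)\times[0,1]$ --- apart from one small repair: $\psi(\lceil nt\rceil/n,\,y)$ need not converge at rational $t$ (where $nt$ is an integer for infinitely many $n$), so you should evaluate at $(\lfloor nt\rfloor+1)/n$, i.e.\ at strict right endpoints, exactly as the paper does with its dyadic intervals.
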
 
\begin{proof}
 We first assume that $f$ is bounded: $f(x,y)\leq M$ for all $(x,y)\in X^*$. For  $(t,y)\in [0,M]\times[0,1],$ 
consider the mapping 
$$g(t,y):=\mu_y(\{x: f(x,y)>t\}).$$
 Of course, $g$ is decreasing in $t$.
By means of dyadic partitions of $[0,M]$, (say $\{[t_i^n,t_{i+1}^n[, i=0,...,2^n-1\}$, where 
$t_i^n=\dfrac{i}{2^n}M$), it is easy to construct two sequences, $(g_n)_n$ and $(g^n)_n$ of {\em step}
 functions, such that for each $n$ and $y$ the mapping $g_n(t;y)$ is constant in each dyadic interval, and 
equal to the value of $g(\cdot,y)$ at the right endpoint of the interval, while $g^n(t;y)$ equals the value of 
$g(\cdot,y)$ at the left endpoint. 
In such a way, we have 
$$g_n(t;y)\leq g_{n+1}(t;y)\leq g(t,y)\leq g^{n+1}(t;y)\leq g^n(t;y)$$
for all $n$, $y$ and $t$.
We also point out that the mappings $g_n$ and $g^n$ (considered as depending on $t$ and $y$), are 
$\mathcal{B}^2$-measurable: this  follows from Lemma \ref{joint}, since (\ref{joint}-ii) is satisfied by 
construction an (\ref{joint}-i)
 follows from integrability of $f$ and definition of $m$.
\\
Since $g$ is continuous in $t$ for all $t$ except a countable set, we can deduce that both sequences $(g_n)_n$ 
and $(g^n)_n$ converge to $g$ except for a countable set of values $t$ (possibly depending on $y$).
Then, denoting by $\underline{g}$ and $\overline{g}$ respectively the limits of $g_n$ and $g^n$, and using 
dominated convergence, we see that
$$\int_0^M\underline{g}(t;y)dt=\int_0^Mg(t;y)dt=\int_0^M\overline{g}(t;y)dt$$
holds, for all $y$.
Moreover, the functions $\underline{g}$ and $\overline{g}$ are $\mathcal{B}^2$-measurable, as limits of 
sequences of mappings of this type.
Thanks to Fubini's Theorem and to convergence in $L^1$, we now deduce that
\begin{eqnarray*}
\int_{X^*}f(x,y) dm &=&  \int_0^M\left(\int_0^1g(t;y)dy\right) dt\geq
\int_0^M\left(\int_0^1 \underline{g}(t;y)dy\right)dt =
\\ &=& 
\int_0^1\left(\int_0^M \underline{g}(t;y)dt\right)dy=
\int_0^1\left(\int_0^M g(t;y)dt\right)dy=
\\ &=&
\int_0^1\left(\int_Xf(x,y)d\mu_y(x)\right) dy.
\end{eqnarray*}
On the other hand,
\begin{eqnarray*}
 \int_{X^*}f(x,y) dm &=& \int_0^M\left(\int_0^1g(t;y)dy\right) dt
\leq\int_0^M\left(\int_0^1\overline{g}(t;y)dy\right)dt=\\
&=&
 \int_0^1\left(\int_0^M \overline{g}(t;y)dt\right)dy=
\int_0^1\left(\int_0^M g(t;y)dt\right)dy=
\\ &=& \int_0^1\left(\int_Xf(x,y)d\mu_y(x)\right) dy.
\end{eqnarray*}
Comparing the two inequalities found, we obtain the assertion, for bounded $f$.\\
Now, if $f$ is unbounded, it is easy to reach the conclusion, by setting
$f_n=f\wedge n$ for each integer $n$, and observing that
$$\int_{X^*}f dm=\lim_n\int_{X^*}f_n dm$$
and also
$$\int_{X}f(x,y) d\mu_y(x)=\lim_n\int_{X}f_n(x,y) d\mu_y(x)$$
for each fixed $y$, from which
$$\int_0^1\left(\int_Xf(x,y)d\mu_y(x)\right) dy=\lim_n\int_0^1\left(\int_X f_n(x,y)d\mu_y(x)\right) dy,$$
by monotone convergence.
\end{proof}

\begin{corollario}\label{fubini1}
%\mg{fubini1}
Assume that $f:X^* \to \mathbb{R}^+_0$ is sectional and integrable.
Then we have, for each $H\in \mathcal{H}$:
$$\int_{H}f dm=\int_0^1f(y)\mu_y(H_y)dy,$$ 
where as usual $H_y$ denotes the $y$-section of $H$.
\end{corollario}  
\begin{proof}
Let $f:X^*\to \erre^+_0$ be any sectional integrable map, $f(x,y)=f(y)$, and choose arbitrarily any measurable set 
$H\subset X^*$. Then 
$$\int_{H} f dm=\int_{X^*}f(y)1_H(x,y)dm.$$
Thanks to Theorem \ref{fubini2}, we get
$$\int_{H} f dm=\int_0^1\left(\int_{H_y}f(y)d\mu_y(x)   \right)dy,$$
where $H_y=\{x\in X:(x,y)\in H\}$. Since the inner integrand is independent on $x$, we obtain
$$\int_{H} f dm=\int_0^1 \mu_y(H_y)f(y)dy,$$
as announced.
\end{proof}

\begin{osservazione}\label{add} 
%\mg{add} 
\rm
Another consequence of Theorem \ref{fubini2} is the following.

If two integrable  functions $f_1,f_2$ are comonotonic with respect to $x$ for every $y \in [0,1]$, then
%\mg{com-add}
\begin{eqnarray}
\label{com-add}
\int_{X^*} (f_1 + f_2) dm  &=& \int_0^1 \left(\int_X [f_1(x,y) + f_2(x,y) ]d\mu_y(x)\right) = \\
&=& \nonumber  \int_{X^*} f_1 dm + \int_{X^*} f_2 dm.
\end{eqnarray}
In particular, If $f_1, f_2$ are sectional and integrable functions and $f_1 \pm f_2 \geq 0$
 %and $m$ is decomposable
 then, for every $H \in \mathcal{H}$
%\mg{sec-add}
\begin{eqnarray}
\label{sec-add}
 \int_H ( f_1 \pm  f_2 ) dm =  \int_H f_1 dm \pm  \int_H f_2 dm.
\end{eqnarray}
\end{osservazione}
%=====================================

For measurable  vector functions $f : X^* \rightarrow \mathbb{R}_+^n$, the Choquet integral is defined componentwise,
 so it is an $n$-dimensional vector too. Assuming that $m$ is decomposable,
we first prove the following result:
\begin{proposizione}\label{perscalari}
%\mg{perscalari}
For every constant vector $p\in \erre^n_+$ and every  integrable sectional function $f:X^*\to \erre^n_+$, we have
$$\int_{X^*} p \cdot f dm = p \cdot \int_{X^*}fdm.$$
\end{proposizione}
\begin{proof}
If $f$ is sectional then its components are sectional too and so it is enough to apply  (\ref{sec-add}) of Remark \ref{add}.
\end{proof}

The additivity obtained in Propostion \ref{perscalari} can be extended 
to functions $f(x,y) = g(x) h(y)$ for suitable $g$ and $h$.
\begin{proposizione}\label{perscalari2}
For every constant vector $p\in \erre^n_+$, every bounded measurable function $g:X\to \erre^+_0$,  and every  integrable vector function $h:[0,1]\to \erre^n_+$, we have
$$\int_{X^*} g(x) p \cdot h(y) dm = p \cdot \int_{X^*}g(x)h(y)dm.$$
\end{proposizione}
\begin{proof}
First, we observe that
 the conclusion can be easily obtained, when $g=c1_H$, where $c$ is any positive real constant and $H$ is any 
measurable subset of $X$, i.e.
\begin{eqnarray}\label{primostep}
\int_{X\times [0,1]} c1_H(x) p \cdot h(y) dm = p \cdot \int_{X\times [0,1]} c 1_H(x) h(y) dm.
\end{eqnarray} 
Now, when $g$ is any simple function, with decreasing representation $g=\sum_ic_i1_{H_i}$, one has
$$\int_{X^*} g(x) p \cdot h(y) dm = \int_0^1\left(\int_Xg(x)d\mu_y\right)p \cdot h(y) dy=
\int_0^1\left(\sum_i c_i\mu_y(H_i) \right) p \cdot h(y)dy$$
thanks to Theorem \ref{fubini2}. 
But we have
\begin{eqnarray*}
\int_0^1\left(\sum_i c_i\mu_y(H_i) \right) p \cdot h(y)dy &=& 
\sum_i \int_0^1\left(c_i1_{H_i}(x)d\mu_y(x)\right) p \cdot h(y)dy= \\ &=& p \cdot \int_{X^*}g(x)h(y)dm,
\end{eqnarray*}
by virtue of (\ref{primostep}).
Finally, if $g$ is any bounded measurable function, it can be uniformly approximated by an increasing sequence of simple
 functions $(g_n)_n$; then, by the properties of the Choquet integral, one has that 
$$\lim_n \int_{X^*} g_n(x)p \cdot h(y)dm=\int_{X^*}g(x)p \cdot h(y)dm,$$
and finally
$$\int_{X^*}g(x)p \cdot h(y)dm=p \cdot \int_{X^*}g(x)h(y)dm$$
follows from the previous step.
\end{proof}

Our next goal is to prove that, in case $f:X^*\to \erre^n_+$ is sectional, and if $m$ is a decomposable fuzzy 
measure of a {\em special} type, then the set
$R(f):=\{\int_H f dm:H\in \mathcal{H}\}$
is convex. We need the following
\begin{definition}\label{convextype}\rm 
Let $m:\mathcal{H}\to \erre^+_0$ be a decomposable fuzzy measure. We shall say that $m$ is of {\em convex type}
 if the measures $\mu_y$ are uniformly filtering in the $\sigma$-algebra $\mathcal{A}$.
\end{definition}
We observe that $m$ is of convex type if all measures $\mu_y$ coincide with a semiconvex submeasure $\mu$ on 
$X$, or in the particular case of a finite number of sections as described in the Introduction.

We  shall also make use of the following Lemma.
\begin{lemma}\label{usotau}
%\mg{usotau}
In the situation described above, let $\tau:[0,1]\to [0,1]$ be any measurable mapping. Then, there exists a
 measurable set $A\in \mathcal{H}$ such that 
$$\mu_y(A_y)=\tau(y)\mu_y(X),$$
for all $y\in [0,1]$.
\end{lemma}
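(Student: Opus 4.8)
The plan is to reduce the whole problem to a single ``universal'' slicing of the space $X$ and then encode the threshold $\tau(y)$ through a measurable comparison. Since $m$ is of convex type (Definition \ref{convextype}), the measures $\mu_y$ are uniformly filtering, so applying the filtering property (Definition \ref{filtrazioneastratta}) to the set $A=X$ produces a \emph{single} increasing family $(X_t)_{t\in[0,1]}\subset\mathcal{A}$, the same for every $y$, with $X_0=\emptyset$, $X_1=X$ and
\[
\mu_y(X_t)=t\,\mu_y(X)\qquad\text{for all }y\in[0,1]\text{ and all }t\in[0,1].
\]
This one family is the crux: it cuts every $\mu_y$ proportionally at once, which is exactly what uniform filtering buys us over ordinary filtering.

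Next I would introduce the ``hitting level'' $\sigma:X\to[0,1]$, $\sigma(x):=\inf\{t:\,x\in X_t\}$, where the infimum is over a non-empty set because $X_1=X$. Monotonicity of $(X_t)_t$ gives $\{x:\sigma(x)<s\}=\bigcup_{t<s,\,t\in\mathbb{Q}}X_t$, a countable union of members of $\mathcal{A}$, so $\sigma$ is $\mathcal{A}$-measurable. I would then \emph{define}
\[
A:=\{(x,y)\in X^*:\ \sigma(x)\le \tau(y)\}.
\]
Because the coordinate projections are measurable from $\mathcal{H}=\mathcal{A}\times\mathcal{B}$, the map $(x,y)\mapsto\sigma(x)-\tau(y)$ is $\mathcal{H}$-measurable, and hence $A\in\mathcal{H}$.

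It remains to identify the measure of the sections. For fixed $y$, again by monotonicity of $(X_t)_t$, the section is $A_y=\{x:\sigma(x)\le\tau(y)\}=\bigcap_{t>\tau(y)}X_t$. Writing $s=\tau(y)$, the inclusions $X_s\subseteq\bigcap_{t>s}X_t\subseteq X_t$ (valid for every $t>s$), combined with $\mu_y(X_t)=t\,\mu_y(X)$ and monotonicity of $\mu_y$, squeeze the value $\mu_y(A_y)$ between $s\,\mu_y(X)$ and $t\,\mu_y(X)$; letting $t\downarrow s$ yields $\mu_y(A_y)=\tau(y)\,\mu_y(X)$, as required. The boundary cases $\tau(y)=0$ and $\tau(y)=1$ are covered by the same squeeze (the lower bound $X_0=\emptyset$ in the first case, and $\bigcap_{t>1}X_t=X$ by the empty-intersection convention in the second).

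The point I would stress is that fuzzy measures need not be continuous from below, so one cannot simply compute $\mu_y(A_y)$ as a limit $\lim_n\mu_y(X_{t_n})$; the argument sidesteps this by defining $A$ through $\sigma$ and invoking only monotonicity in the squeezing step, and by never requiring $A_y$ to equal $X_{\tau(y)}$ exactly — only the measure identity is needed. Consequently the single genuine obstacle is the joint $\mathcal{H}$-measurability of $A$, which is disposed of as soon as $\sigma$ is shown to be $\mathcal{A}$-measurable; everything else is the elementary squeeze above.
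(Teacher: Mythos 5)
Your proof is correct, and it rests on the same two pillars as the paper's argument: the single uniform filtering family $(X_t)_{t\in[0,1]}$ obtained by applying the convex-type hypothesis to $A=X$, and a squeeze that uses only monotonicity of the $\mu_y$'s (so that no continuity from below/above is ever invoked). Where you genuinely diverge is in the construction of the witness set. The paper approximates $\tau$ dyadically by simple functions $s_k\uparrow\tau$ and $S_k\downarrow\tau$ based on the partitions $J_i^k=\tau^{-1}([i/2^k,(i+1)/2^k[)$, forms the finite unions of rectangles $E_k=\bigcup_i X_{i/2^k}\times J_i^k$ and $F_k=\bigcup_i X_{(i+1)/2^k}\times J_i^k$, passes to $E=\bigcup_k E_k$ and $F=\bigcap_k F_k$, and closes the squeeze by combining $\mu_y(E_y)\geq\tau(y)\mu_y(X)$, $\mu_y(F_y)\leq\tau(y)\mu_y(X)$ and $E\subset F$. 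You instead build the set in one stroke as the sub-level set $A=\{(x,y):\sigma(x)\leq\tau(y)\}$ of the hitting function $\sigma(x)=\inf\{t:x\in X_t\}$, and squeeze each section between $X_{\tau(y)}$ and $X_t$, $t>\tau(y)$. In fact the two constructions produce the same object: section-wise, the paper's $F$ satisfies $F_y=\bigcap_k X_{S_k(y)}=\bigcap_{t>\tau(y)}X_t=A_y$, so your $A$ is the paper's $F$ reached without the approximation machinery. The trade-off is clear: the paper's route makes membership in $\mathcal{H}$ immediate (countable unions and intersections of measurable rectangles) at the cost of carrying two approximating sequences and needing the comparison $E\subset F$ to finish; your route is more economical (one set, one squeeze, no limits of sets), but shifts the burden onto measurability, which you correctly discharge by showing $\{\sigma<s\}=\bigcup_{q<s,\,q\in\mathbb{Q}}X_q\in\mathcal{A}$ and then observing that $A$ is the preimage of $(-\infty,0]$ under the $\mathcal{H}$-measurable map $(x,y)\mapsto\sigma(x)-\tau(y)$. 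Your handling of the boundary cases $\tau(y)\in\{0,1\}$ and your explicit remark that only the measure identity, not the set identity $A_y=X_{\tau(y)}$, is needed are both accurate.
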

\begin{proof}
Since $0\leq \tau\leq 1$, it is possible to construct an increasing sequence $(s_k)_k$ of simple functions 
converging to $\tau$, and a decreasing sequence $(S_k)_k$ of simple functions, also converging to $\tau$. 
Moreover, these sequences can be based on the same partitions of $[0,1]$, built in a diadic way. So we can write
$$s_k=\sum_{i=0}^{2^k-1}c_i^k\ 1_{J_i^k},\ \ \ S_k=\sum_{i=0}^{2^k-1}C_i^k\ 1_{J_i^k},$$
where $c_i^k=\dfrac{i}{2^k},  C_i^k=\dfrac{i+1}{2^k}$, and 
$J_i^k=\tau^{-1} ([\dfrac{i}{2^k},\dfrac{i+1}{2^k}[) $ for all $k$ and $i=0...2^k-1$. 

Thanks to the convex-type hypothesis, there exists a filtering family $(X_t)_{t\in [0,1]}$ in $X$, satisfying (\ref{CM2.1}-i,ii,iii) simultaneously for all the measures $\mu_y$.
So we can set, for each $k$:
$$E_k:=\bigcup_{i=0}^{2^k-1}X_{c_i^k}\times J_i^k,\ \ \ F_k:=\bigcup_{i=0}^{2^k-1}X_{C_i^k}\times J_i^k.$$
 Clearly, the sets $E_k$ and $F_k$ belong to $\mathcal{H}$, and
we have $E_k\subset E_{k+1}\subset F_{k+1}\subset F_k$, for all $k$.
Moreover, we can see that
$$\mu_y(E_{k,y})=s_k(y)\mu_y(X),\ \ \mu_y(F_{k,y})=S_k(y)\mu_y(X)$$
for all $k$ and $y$.

Now, setting $E=\bigcup E_k,\ \ F=\bigcap F_k$, both $E$ and $F$ belong to $\mathcal{H}$, and $E\subset F$.

\noindent By monotonicity, we have then
$$\mu_y(E_y)\geq \sup_k\mu_y(E_{k,y})=\lim_ks_k(y)\mu_y(X)=\tau(y)\mu_y(X),$$
and
$$\mu_y(F_y)\leq \inf_k\mu_y(F_{k,y})=\lim_kS_k(y)\mu_y(X)=\tau(y)\mu_y(X).$$
Comparing these inequalities, and recalling that $E\subset F$, we can conclude that
$$\mu_y(E_y)=\mu_y(F_y)=\tau(y)\mu_y(X)$$
for all $y$. So, any of the sets $E$ or $F$ is as requested.
\end{proof}

\begin{theorem}\label{primoconvesso}
%\mg{primoconvesso}
Let's assume that $m$ is a fuzzy measure of convex type, and that $f:X^*\to \erre^n_+$ is a sectional integrable function.
Then the set $R(f)$ is convex.
\end{theorem}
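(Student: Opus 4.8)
The plan is to parametrize the range $R(f)$ by measurable functions $\tau:[0,1]\to[0,1]$ and to exploit the fact that this parametrization is affine, so that convexity of $R(f)$ reduces to the trivial convexity of the set of such $\tau$. First I would invoke Corollary \ref{fubini1}, applied componentwise since the Choquet integral of a vector function is taken coordinate by coordinate, to write
$$\int_H f\,dm=\int_0^1 f(y)\,\mu_y(H_y)\,dy$$
for every $H\in\mathcal{H}$, an equality of vectors in $\erre^n_+$. By the non-triviality of the $\mu_y$ one has $\mu_y(X)>0$, so I set $w(y):=\mu_y(X)$, a bounded (by equiboundedness) measurable positive function, and $\tau_H(y):=\mu_y(H_y)/w(y)$, which takes values in $[0,1]$ by monotonicity and is measurable thanks to the measurability clause built into Definition \ref{evoluzione}. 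Thus $\int_H f\,dm=\int_0^1 f(y)\tau_H(y)w(y)\,dy$.

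Next I would run the convexity argument itself. Given $H_1,H_2\in\mathcal{H}$ and $\alpha\in[0,1]$, the function $\tau:=\alpha\tau_{H_1}+(1-\alpha)\tau_{H_2}$ is measurable and again $[0,1]$-valued. Here is where the convex-type hypothesis enters: Lemma \ref{usotau} produces a coalition $A\in\mathcal{H}$ with $\mu_y(A_y)=\tau(y)w(y)$ for every $y$. Plugging $A$ into the formula above and using the linearity of the ordinary Lebesgue integral $\int_0^1\cdot\,dy$ yields $\int_A f\,dm=\alpha\int_{H_1}f\,dm+(1-\alpha)\int_{H_2}f\,dm$, so the convex combination belongs to $R(f)$, as required.

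Finiteness is not an obstacle: taking $H=X^*$ gives $\int_{X^*}f\,dm=\int_0^1 f(y)w(y)\,dy<\infty$ by integrability of $f$, and every integral $\int_0^1 f(y)\tau(y)w(y)\,dy$ with $0\le\tau\le 1$ is dominated by it. The substance of the theorem is therefore entirely delegated to Lemma \ref{usotau}, which is precisely the step where the uniformly filtering families are actually used; what remains here is the observation that each admissible section profile $y\mapsto\mu_y(A_y)/w(y)$ ranges over all $[0,1]$-valued measurable functions, a convex set, and that the integral depends affinely on it. Accordingly, I expect the only delicate point to be the measurability bookkeeping for $w$, $\tau_H$, and $\tau$, all of which follows from Definition \ref{evoluzione}; no deeper difficulty should arise.
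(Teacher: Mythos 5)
Your proof is correct and rests on essentially the same mechanism as the paper's: the representation $\int_H f\,dm=\int_0^1 f(y)\,\mu_y(H_y)\,dy$ from Corollary \ref{fubini1}, combined with Lemma \ref{usotau} to realize any measurable $[0,1]$-valued profile $\tau$ as $y\mapsto \mu_y(A_y)/\mu_y(X)$ for some $A\in\mathcal{H}$. The only difference is organizational: the paper identifies $R(f)$ with the Aumann integral of the segments $S(y)$ joining $0$ to $\mu_y(X)f(y)$ and proves the two inclusions (which requires extracting a measurable $\tau$ from an arbitrary selection $\sigma$, handled there by dividing by a nonzero component $f_i(y)$), whereas you verify the convex combination directly, so your $\tau$'s come from sets and are measurable by Definition \ref{evoluzione} with no extra argument needed.
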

\begin{proof}
 As usual we shall identify $f$ with its (vector) section function, and fix any element $H\in \mathcal{H}$. We have, from Corollary \ref{fubini1}:
$$\int_H fdm= \int_0^1 f(y)\mu_y(H_y)dy.$$
More precisely, for any $y\in [0,1]$, set
$$\Lambda(y)=(f_1(y)\mu_y(H_y),...,f_n(y)\mu_y(H_y)):$$ clearly, $\Lambda$ is measurable, and 
$\int_H f dm=\int_0^1\Lambda(y)dy.$
Since $\mu_y(H_y)\leq \mu_y(X)$, the vector $\Lambda(y)$ lies in the line segment joining the origin $O$ with the vector 
$\mu_y(X)f(y)$. So, if we denote this segment by $S(y)$, it is clear that $\int_Hfdm\in \int_0^1 S(y)dy$,
where the latter is an Aumann integral, i.e. the set of all integrals of measurable selections from $y\mapsto S(y)$: 
from now on, we shall denote by $D$ this Aumann integral.
Since $D$ is clearly convex, if we prove that $R(f)=D$ 
the proof is complete.
\\
\noindent But we have already seen that $R(f)\subset D$, so it only remains to show the converse.
To this aim, let us fix any measurable selection $\sigma(y)$ from $S(y)$. Then, for all $y$ there exists a real number
 $\tau(y)\in [0,1]$ such that (componentwise)
$$\sigma_i(y)=\tau(y)\mu_y(X)f_i(y).$$
The mapping $\tau$ can be taken measurable: indeed, let $i$ be any index  for which $f_i(y)\neq 0$; 
then $\tau(y)=\dfrac{\sigma_i(y)}{\mu_y(X)f_i(y)}$
and this value is independent of $i$. Otherwise, if $f_i(y)= 0$
we can choose $\tau$ as in the previous case, since its value is immaterial.
\\
Now, since $0\leq \tau\leq 1$, thanks to Lemma \ref{usotau}, we can find a measurable set $E\in \mathcal{H}$ 
such that
$\mu_y(E_y)=\tau(y)\mu_y(X)$
for all $y\in [0,1]$, and so, componentwise:
$$\int_E f_idm=\int_0^1f_i(y)\mu_y(E_y)dy=
\int_0^1f_i(y)\tau(y)\mu_y(X)dx=\int_0^1\sigma_i(y)dy.$$
Since $\sigma$ was arbitrary, this shows that $D\subset R(f)$, and the proof is finished.
\end{proof}

%======================================
\section{Applications to equilibria}
\label{walras}
%\mg{walras}
We shall now introduce our economic model. 
For vector measurable functions $f = (f_1, \ldots f_n): X^* \rightarrow \mathbb{R}_+^n$  we consider the monotone integral, 
and we shall keep the notation $\int f dm$, as the vector $\int fdm= \left( \int f_1dm, \ldots ,\int f_ndm\right).$
Sometimes we shall denote by $a$ the generic element $(x,y)\in X^*$, and shall also use the notation 
$f\in L^1_{C}(m, \mathbb{R}_+^n)$ meaning that each component is in $L^1_{C}(m)$.\\

We define a \it pure exchange economy \rm to be a 4-tuple
$$ {\mathcal E}=\{ (X^*,\mathcal{H},m);~ \mathbb{R}_+^n;~ e;~~\{\succ_a\}_{a\in X^*} \},$$ where:\\
-  the \sl space of agents \rm is a triple $(X^*, \mathcal{H}, m),$ with $(X^*, \mathcal{H})$
a measurable space and $m$ is a fuzzy measure of convex type.
Moreover we shall require that each $m$ is a submodular and the ideal of $m$-null sets is stable under countable unions. Under these conditions (see \cite{primofuzzyar,ds}) the Choquet integral for scalar non-negative functions satisfies the following requirements:
\begin{description}
\item[($c_1$)] 
$\int_{X^*} (f+g)dm\leq \int_{X^*}f dm+\int_{X^*}g dm$
\item[($c_2$)] \label{c-2} %\mg{c-2}
If 
$ \int_A fdm \leq \int_A g dm$ for every $A\in \Hs$  then $ f\leq g \, m$-a.e.\\
\end{description}
{\bf We shall also assume here that $\mu_y(X)=m(X^*)=1$ for all $y\in [0,1]$.}\\
%%Non credo che questa condizione sia proprio necessaria, ma semplifica le cose. Mimmo, 02022014
-the target space $\erre ^n$ is the \sl commodity space\rm, and its positive cone  
$\mathbb{R}_+^n$ is called the \sl consumption set\rm\ of each agent;\\
- $e:X^* \rightarrow \mathbb{R}_+^n$, $e\in L^1_{C} (m, \mathbb{R}_+^n)$ is the \sl\ initial
endowment  density \rm and $e(a):=e(x,y) \gg 0$ \,$m$-a.e.; we  shall always assume that $e$ is sectional.\\
- $\{\succ_a\}_a$ is the {\sl preference relation} associated to the generic agent
$a\in X^*$.
% namely it is an irreflexive transitive  pre-order.

\noindent
Let us introduce the classical concepts of equilibrium theory in this new setting.
\\
- An {\em allocation} is a measurable function $f:X^* \longrightarrow \mathbb{R}_+^n$; an allocation is \sl feasible\rm\ if
$$\displaystyle\int_{X^*} f \,dm=\int_{X^*} e \,dm.$$
- A {\em price}
is any element $p\in \mathbb{R}_+^n \setminus \{0\}$.\\
- The {\em  budget set} of an agent $a\in X^*$ for the price $p$ is
$B_{p}(a)=\{x\in \mathbb{R}_+^n:\ p x\le p e(a)\}.$\\
- A {\em coalition} is any measurable subset $S$ of $X^* $ 
with $m(S)>0$.\\
- We say that the coalition $S$ can {\em  improve} the
allocation $f$ if there exists an allocation $g$ such that
\begin{description}
\item[($i_1$)] $g(a)\succ_a f(a)\ m$-a.e. in $S$;
\item[($i_2$)] $\displaystyle\int_Sg \,dm =\int_S e \,d m$.
\end{description}
- \rm The {\em core} of ${\mathcal E}$, denoted by  $C({\mathcal E})$, is the set
of all the feasible allocations that cannot be improved by any coalition.\\
- We say that the coalition $S$ \em strongly improves \rm\ the
allocation $f$ if there exists an %{\color{blue}(sectional?)} 
allocation $g$ such that
\begin{description}
\item[($i_1$)] $g(a)\succ_a f(a)\ m$-a.e. in $S$;
\item[($i^{\prime}_2$)] $\displaystyle\int_{S_y}g(\cdot,y) \,d\mu_y =\int_{S_y} e(\cdot,y) \,d \mu_y$ \ for a.e. $y\in [0,1]$.
\end{description}
-  The {\em  large core} of ${\mathcal E}$, denoted by $\LCEs$, is the set
of all the feasible allocations that cannot be strongly improved by any coalition. Of course, $\CEs\subset \LCEs$.\\
-\rm\ A \it Walras equilibrium\rm\
of ${\mathcal E}$ is a pair
$(f,p)\in L^1_C(m, \mathbb{R}_+^n)\times(\mathbb{R}_+^n \setminus \{0\})$
such that:
\begin{description}
\item[($w_1$)] $f$ is a feasible allocation;
\item[($w_2$)] $f(a)$ is a maximal element of $\succ_a$ in the
budget set $B_{p}(a)$, 
(namely $f(a) \in B_p (a)$ and $x \succ_a f(a)$ implies $p \cdot x > p \cdot e(a)$)
for $m$-almost all $a\in X^*$.
\end{description}
- A {\em walrasian allocation}  is a feasible allocation $f$ such
that there exists a price $p$ so that the pair $(f,p)$ is a Walras
equilibrium. $W({\mathcal E})$ is the set of all the walrasian allocations of ${\mathcal E}$.\\

Our aim is to obtain relations between  Walras equilibria  ${\mathcal W}(\mathcal{E})$ 
and core of an economy ${\mathcal C}({\mathcal E})$.

\begin{ass}\label{leA} %\mg{leA} \rm In order to do this we shall need to adapt the usual hypotheses on ${\mathcal E}$ to the fuzzy case as follows:
\begin{description}
\item[(\ref{leA}.1)] ({\sl Perfect competition})\  $m$ is a fuzzy  measure of convex type, and the corresponding  
measures $\mu_y$ are sub-additive.
%%Qui richiedo la subadditivita': d'ora in poi serve. Mimmo, 02022014
This condition describes an economy where the big coalition is the average of 
many autonomous sections.
%This condition models an economy where the grand coalition is the average of 
%many oligopolies each with a capacity structure in itself.
\item[(\ref{leA}.2)] $e:X^* \rightarrow (\mathbb{R}_+^n)^{\circ}$ is sectional, and $\varphi^e$ will denote
 its (vector) section function.
Observe that this implies that the aggregate initial endowment
\mbox{$\displaystyle\int_{X^*} e(a)\,dm\in (\mathbb{R}_+^n)^{\circ}$}. 
Let $\lambda: \mathcal{H} \rightarrow \mathbb{R}_+^n$
defined by $\lambda(H) = \int_H e dm$. Then $\lambda$ is a fuzzy subadditive measure.
\item[(\ref{leA}.3)] In $\mathbb{R}_+^n$ there exist  preorders $\succ_a$, $a\in X^*\times [0,1]$, 
that satisfy the following:
\begin{description}
 \item[(\ref{leA}.3a)] ({\sl Monotonicity}) for every $x\in \mathbb{R}_+^n$ and every 
$v\in  \mathbb{R}_+^n \setminus\{0\},$ $x+v\succ_a x$ for all
$a\in X^*$.
 \item[(\ref{leA}.3b)] {\em continuity}, namely for all $x \in (\mathbb{R}_+^n)^{\circ}$ the set 
$\{y \in \mathbb{R}_+^n: y \succeq_a x
 \}$ is closed in $\mathbb{R}_+^n$ for all $a \in X^*$;
\end{description}
  and  such that for every $y$ and  $a \in X\times {y}$, $\succeq_a $ is the same preorder depending only on $y$,
 i.e. \ $x \succeq_a x'$ can be written as
$x \succeq_y x'$. In other words, in each coalition $E_y:=X\times\{y\}$, agents
share the same initial endowment and the same preference
criterion.
\end{description}
\end{ass}
In order to study relations between ${\mathcal C}({\mathcal E})$ and  ${\mathcal W}(\mathcal{E})$,
we observe that
\begin{theorem}\label{WESCES}
Under the assumptions  \ref{leA} the inclusion $\CEs \supset \WEs$ holds true.
\end{theorem}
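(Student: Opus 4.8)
The statement is the elementary half of the core--Walras equivalence: every Walrasian allocation belongs to the core, i.e.\ it cannot be improved by any coalition. The plan is to take a Walras equilibrium $(f,p)$ — so that $f$ is feasible by $(w_1)$ and is already a core candidate — and to argue by contradiction that no coalition improves it. Suppose then that some coalition $S$, with $m(S)>0$, improves $f$ through an allocation $g$, so that $(i_1)$ and $(i_2)$ hold. Applying the budget-maximality condition $(w_2)$ with $x=g(a)$, the relation $g(a)\succ_a f(a)$ coming from $(i_1)$ forces the pointwise price inequality
$$p\cdot g(a)>p\cdot e(a)\qquad m\text{-a.e.\ on }S.$$

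The second step is to integrate this over $S$ and to compare with the balance condition $(i_2)$, treating the two sides asymmetrically. Since $e$ is sectional, $p\cdot e=\sum_i p_ie_i$ is a finite sum of sectional non-negative functions, so (\ref{sec-add}) of Remark \ref{add}, together with positive homogeneity (\ref{def-ch}.ii), yields the exact identity $\int_S p\cdot e\,dm=p\cdot\int_S e\,dm$. The allocation $g$ is not assumed sectional, so for it only the one-sided estimate from subadditivity $(c_1)$ is available, namely $\int_S p\cdot g\,dm\le p\cdot\int_S g\,dm$; fortunately this points in the right direction. Inserting $(i_2)$, i.e.\ $\int_S g\,dm=\int_S e\,dm$, I would chain these relations into
$$\int_S p\cdot e\,dm<\int_S p\cdot g\,dm\le p\cdot\int_S g\,dm=p\cdot\int_S e\,dm=\int_S p\cdot e\,dm,$$
an absurdity, which shows that $S$ cannot improve $f$ and hence that $f\in\CEs$.

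The one delicate link — and the step I expect to be the main obstacle — is the first, strict inequality in the chain: upgrading the $m$-a.e.\ strict pointwise inequality $p\cdot g>p\cdot e$ on the positive-measure set $S$ to a strict inequality between the two Choquet integrals over $S$. For an additive measure this is immediate, but for the non-additive $m$ it is exactly where the standing hypotheses must enter (submodularity of $m$ and the $m$-null sets forming a $\sigma$-ideal, the very hypotheses that secure $(c_1)$ and $(c_2)$). The route I would follow is to reduce to the sections via the Fubini-type Theorem \ref{fubini2} applied to $(p\cdot g)1_S$ and $(p\cdot e)1_S$: since $m(S)=\int_0^1\mu_y(S_y)\,dy>0$, the set of $y$ with $\mu_y(S_y)>0$ has positive Lebesgue measure, and for almost every such $y$ the strict inequality survives $\mu_y$-a.e.\ on $S_y$. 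It then remains to establish a strict-monotonicity property of the sectional Choquet integral against the uniformly filtering measures $\mu_y$ — that a strict inequality holding $\mu_y$-a.e.\ on a set of positive $\mu_y$-measure yields a strict inequality of the integrals — and to integrate the resulting section-wise gap in $y$. This strict-monotonicity fact is where the convex-type (filtering) structure, together with submodularity and the $\sigma$-ideal of null sets, is genuinely needed; the remainder is the classical Walras-in-core bookkeeping.
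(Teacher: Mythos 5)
Your skeleton coincides with the paper's own proof: argue by contradiction with an improving coalition $S$ and allocation $g$, use $(w_2)$ to get $p\cdot g(a)>p\cdot e(a)$ $m$-a.e.\ on $S$, then play the exact identity $\int_S p\cdot e\,dm=p\cdot\int_S e\,dm$ (sectionality of $e$, via (\ref{sec-add})) against the subadditivity estimate $\int_S p\cdot g\,dm\le p\cdot\int_S g\,dm$ and the balance condition $(i_2)$. You are also right that the only delicate link is the strict inequality $\int_S p\cdot g\,dm>\int_S p\cdot e\,dm$; the paper simply asserts it in its display (\ref{uno}) without justification, so your instinct to isolate it is sound.

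The genuine problem is that the lemma you propose to close this link is false as stated. For a fixed $y$, ``$h>c$ $\mu_y$-a.e.\ on a set of positive $\mu_y$-measure implies strict inequality of the Choquet integrals'' does not follow from $\mu_y$ being subadditive and filtering. Take $\mu_0$ a translation-invariant, finitely additive extension of Lebesgue measure to all subsets of $[0,1)$ (precisely the kind of measure used in Remark \ref{rem-filtr}; it is filtering, and additive hence subadditive). The Vitali translates $V_{q_k}$, $q_k\in\mathbb{Q}\cap[0,1)$, partition $[0,1)$ and each is $\mu_0$-null by translation invariance and finite additivity. For $h=c+\sum_k k^{-1}1_{V_{q_k}}$ one has $h>c$ everywhere, yet for every $s>0$ the set $[h>c+s]$ is a \emph{finite} union of null sets, hence null, so $\int h\,d\mu_0=c\,\mu_0([0,1))$: no strict inequality. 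The hypotheses that would exclude this — submodularity and the $\sigma$-ideal property of null sets — are assumed for $m$ only, and they cannot be localized to a fixed section, because every subset of a single section $X\times\{y\}$ is automatically $m$-null ($m$ integrates $\mu_y(H_y)$ in $dy$, and a single point has Lebesgue measure zero). So the step you leave ``to be established'' cannot be established in the form you state it.

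The repair is to extract a quantitative gap at the level of $m$ \emph{before} sectioning. Let $N$ be the $m$-null exceptional set and put $T_n:=\{a\in S\setminus N:\ p\cdot g(a)>p\cdot e(a)+1/n\}$, so that $S\setminus N=\bigcup_n T_n$. By the $\sigma$-ideal property of $m$-null sets and subadditivity of $m$ (which follows from submodularity), some $T_{n_0}$ satisfies $m(T_{n_0})>0$. For a.e.\ $y$ one has $\mu_y(N_y)=0$, and then subadditivity of $\mu_y$ alone yields $\mu_y(S_y\setminus N_y)=\mu_y(S_y)$ and $\mu_y(T_{n_0,y}\setminus N_y)=\mu_y(T_{n_0,y})$; minorizing the level sets of $(p\cdot g)1_{S_y}$ on $[0,(p\cdot e)(y))$ and on $[(p\cdot e)(y),(p\cdot e)(y)+1/n_0)$ gives
\begin{equation*}
\int_{S_y}p\cdot g(\cdot,y)\,d\mu_y\ \ge\ (p\cdot e)(y)\,\mu_y(S_y)+\frac{1}{n_0}\,\mu_y(T_{n_0,y}).
\end{equation*}
Integrating in $y$ (the Lebesgue integral is additive) and using Theorem \ref{fubini2}, Corollary \ref{fubini1} and the decomposability of $m$, one gets
\begin{equation*}
\int_S p\cdot g\,dm\ \ge\ \int_S p\cdot e\,dm+\frac{m(T_{n_0})}{n_0}\ >\ \int_S p\cdot e\,dm,
\end{equation*}
which is the strict inequality you need; with it, your chain closes the contradiction exactly as in the paper.
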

\begin{proof} 
The proof is analogous to \cite[Theorem 3.2]{primofuzzyar} and it is reported here for the sake of completeness.
%\mg{*}
Let $f \in \WEs \setminus \CEs$.
Then there exist a coalition  $S$ and a feasible allocation $g$ such that $m$-a.e. in $S$
$g(a) \succ_a f(a)$~ and ~ $\int_S g dm= \int_S e\ dm$.
On the other side there exists a price  $p$ for which $f(a)$ is $\succ _a$ maximal in  $B_p (a)$ $m$-a.e.
in $S$. \par \noindent
Consequently, setting $S_1 = \{ a \in S : p \cdot g(a) \leq p \cdot e(a) \},$ it should be \mbox{$m(S_1) = 0$,}
otherwise the function
$\widetilde{f} =g 1_{S_1} + f 1_{S \setminus S_1}$ would contradict the maximality of $f$.\\
Hence $m$-a.e. in $S$ one has:
\begin{eqnarray*}
p \cdot g(a) &=& \sum_{i=1}^n p_i g_i(a) > \sum_{i=1}^n p_i e_i (a) = p \cdot e(a)
\end{eqnarray*}
whence,  by Proposition \ref{perscalari}
%\mg{uno}
\begin{eqnarray}\label{uno}
&& 
\int_S p \cdot g(a) dm= \int_S \sum_{i=1}^n p_i g_i(a) dm>
\int_S \sum_{i=1}^n p_i e_i (a) dm=
%\sum_{i=1}^n p_i \int_S e_i (a) dm= \\ &&= \nonumber 
p \cdot \int_S e dm.
\end{eqnarray}
Thus
\begin{eqnarray*}
\int p \cdot g d m> p \cdot \int e dm.
\end{eqnarray*}
On the other side, as we have assumed that $g$ improves $f$, from subadditivity we have:
\begin{eqnarray*}
\int_S p \cdot g dm\leq p \cdot \int_S g(a) dm= p \cdot \int_S e dm
\end{eqnarray*}
thus contradicting (\ref{uno}).
\end{proof}

%%%% qui ipotesi su \Gamma
\begin{ass}\label{ass-gamma} %\mg{ass-gamma} \rm
 Suppose now that $f$ is sectional, so that
$f(a)=f(x,y) = f(y)$ and consider the multifunction %\mg{eq:gamma}
\begin{eqnarray}\label{eq:gamma}
&& \Gamma_f (a) :=\Gamma_f(x,y)= \{t \in \mathbb{R}_+^n: t \succeq_a f(a)\}= 
\{t \in \mathbb{R}_+^n: t \succeq_y f(y)\} = C_y,
\end{eqnarray}
 where the ${C_y}'s$ are convex,
closed and contain the sets $u + (\mathbb{R}_+^n)^{\circ}$ when $u \in C_y$. 
The class of its Choquet integrable selections is
\(S^*_{\Gamma_f}= \{ \psi \in L^1_{C} (m, \mathbb{R}_+^n) \mbox{~with~} \psi (a) 
\in \Gamma_f (a) \mbox{~for~} m - \mbox{~a.e.~} a\in X\}.\)\\

So $\Gamma_f$ contains as selections all functions 
that are $\mu_y$-a.e. constant in $X\times\{y\}$ (the constant must be an element of $C_y$).
So all integrable functions of the type $\gamma(x,y)= c(y), c(y) \in C_y$, 
are Choquet integrable selections of $\Gamma$.\\
\end{ass}

Let \begin{eqnarray}\label{I-f}
I_f := \left\{ z = \int_H s dm- \lambda(H), ~~\forall~ H \in \mathcal{H},
~~s \in S^*_{\Gamma_f} \right\}.
\end{eqnarray}
Now, in order to prove the convexity of $I_f$ we need some preliminary results;
the first is a density result of the multivalued integral of $\Gamma$.

\begin{lemma}\label{gsemplice}
If $s \in S^*_{\Gamma_f}$ then, for every $A \in \mathcal{H}$, there exists a sectional
selection $g \in S^*_{\Gamma_f}$ such that
\[\int_A s dm= \int_A g dm.\]
\end{lemma}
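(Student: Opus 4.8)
The plan is to integrate out the $X$-variable and, on each section, replace $s$ by its normalized Choquet integral, which is a single vector of $\mathbb{R}_+^n$; letting $y$ vary then produces a sectional map that I claim is the required selection. For $A\in\mathcal{H}$ set $g(y):=\frac{1}{\mu_y(A_y)}\int_{A_y}s(x,y)\,d\mu_y(x)$ (componentwise) when $\mu_y(A_y)>0$, and $g(y):=f(y)$ on the set $\{y:\mu_y(A_y)=0\}$, on which $f(y)\in C_y$ by reflexivity of $\succeq_y$ and the value is immaterial for the integral over $A$; putting $g(x,y):=g(y)$ makes $g$ sectional. Once integrability is in hand, Corollary \ref{fubini1} gives $\int_A g\,dm=\int_0^1 g(y)\mu_y(A_y)\,dy$, while Theorem \ref{fubini2} applied componentwise to $s\,1_A$ gives $\int_A s\,dm=\int_0^1\big(\int_{A_y}s(x,y)\,d\mu_y(x)\big)\,dy$. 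By the definition of $g$ the two integrands agree for a.e. $y$ (both vanishing where $\mu_y(A_y)=0$), so $\int_A s\,dm=\int_A g\,dm$ follows immediately.

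The conceptual heart is to verify that $g(y)\in C_y$ for a.e. $y$, that is, that the normalized Choquet integral of a $C_y$-valued map remains in $C_y$; this is not automatic, since the Choquet integral is not linear. I would use that each $C_y$ is closed, convex, and upward comprehensive (it contains $u+(\mathbb{R}_+^n)^{\circ}$ whenever $u\in C_y$), so that $C_y$ is the intersection of its supporting half-spaces and every supporting direction $p$ necessarily lies in $\mathbb{R}_+^n$. Fixing one such pair $(p,\alpha)$, with $p\ge0$ and $p\cdot t\ge\alpha$ for all $t\in C_y$, and using that $s(\cdot,y)\in C_y$ a.e., positive homogeneity together with the subadditivity of the Choquet integral for the sub-additive $\mu_y$ (property (\ref{def-ch}.v), available under Assumption \ref{leA}) yield
\[
p\cdot\int_{A_y}s\,d\mu_y=\sum_i p_i\int_{A_y}s_i\,d\mu_y\ge\int_{A_y}(p\cdot s)\,d\mu_y\ge\alpha\,\mu_y(A_y),
\]
the last step being monotonicity when $\alpha\ge0$ and trivial when $\alpha<0$ (then the right-hand side is $\le 0$ while the middle term is $\ge 0$). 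Dividing by $\mu_y(A_y)$ gives $p\cdot g(y)\ge\alpha$, and letting $(p,\alpha)$ range over all supporting half-spaces shows $g(y)\in C_y$, so $g$ selects $\Gamma_f$.

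The step I expect to be the genuine obstacle is integrability: to conclude $g\in S^*_{\Gamma_f}$ one needs $\int_{X^*}g\,dm=\int_0^1 g(y)\,dy<\infty$ (recall $\mu_y(X)=1$), whereas $\mu_y(A_y)$ may be small while $\int_{A_y}s\,d\mu_y$ is not, so the normalized barycenter is \emph{not} integrable a priori. Measurability of $y\mapsto\int_{A_y}s_i\,d\mu_y$ and of $y\mapsto\mu_y(A_y)$, hence of their quotient, follows from the standing assumption in Definition \ref{evoluzione} and the construction in the proof of Theorem \ref{fubini2}, so only integrability is at stake. I would handle it by exploiting that the requirement $\int_A g\,dm=\int_A s\,dm$ is an equality of integrals, not a pointwise one: on $\{|g|>N\}$, a set whose Lebesgue measure tends to $0$ as $N\to\infty$ and on which $\int_{A_y}s\,d\mu_y$ contributes arbitrarily little to $\int_A s\,dm$, one replaces $g$ by a fixed integrable $C_y$-valued sectional selection (such selections exist by Assumption \ref{ass-gamma}) and absorbs the resulting small discrepancy using the convexity of the sets $C_y$, producing an integrable selection with the prescribed integral. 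This truncation-and-correction is where the real work lies.
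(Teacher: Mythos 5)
Your first two paragraphs are precisely the paper's own proof of Lemma \ref{gsemplice}: the paper defines the same normalized barycenter $\varphi(y)=\frac{1}{\mu_y(A_y)}\int_{A_y}s(x,y)\,d\mu_y(x)$ on $K=\{y:\mu_y(A_y)>0\}$, extends it off $K$ by an arbitrary selection of $y\mapsto C_y$, obtains $\int_A g\,dm=\int_A s\,dm$ from Theorem \ref{fubini2} and Corollary \ref{fubini1} exactly as you do, and proves $\varphi(y)\in C_y$ by the same separation argument (the paper runs it as a contradiction with a single separating functional, you run it over all supporting half-spaces; the inequality chain via positive homogeneity, subadditivity (\ref{def-ch}.v) and monotonicity of the Choquet integral with respect to the subadditive $\mu_y$ is identical).

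Where you genuinely depart from the paper is your third paragraph, and there the situation is delicate. You are right that integrability of $g$ is a real issue, and it is worth stressing that the paper never addresses it: its proof stops after the membership argument, although $g\in S^*_{\Gamma_f}$ requires $\int_{X^*}g\,dm=\int_0^1 g(y)\,dy<\infty$, and the barycenter can fail this under all the standing assumptions. For instance, take $n=1$, $\mu_y$ equal to Lebesgue measure on $X=[0,1]$ for every $y$, $C_y=\mathbb{R}_+$, $A=\{(x,y):x\le y^2\}$ and $s(x,y)=\frac{1}{y}\,1_{[0,y^2]}(x)$: then $s$ is $m$-integrable, but $\varphi(y)=\frac{1}{y}$ is not $dy$-integrable (the conclusion of the lemma still holds here, e.g.\ with the constant $g\equiv 3/2$, so the example breaks the construction, not the statement). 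However, your proposed repair does not close the gap you found: replacing $\varphi$ on $T_N=\{|\varphi|>N\}$ by a fixed integrable selection $c$ changes the value of the integral by the vector $\delta_N=\int_{T_N}(\varphi-c)\,\mu_y(A_y)\,dy$, over whose sign you have no control; since the sets $C_y$ are only \emph{upward} comprehensive, you can correct the integral upward (adding a nonnegative constant and using closedness of $C_y$), but not downward, and convexity of the set of attainable integrals only yields that the target value lies in its \emph{closure}, not in the set itself. So the step you yourself call ``where the real work lies'' remains open --- both in your proposal and, in fact, in the paper.
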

\begin{proof}
Let $A \in \mathcal{H}$ and $s   \in S^*_{\Gamma}$ be fixed.
For each $y$, let us define
$$\varphi(y)=\dfrac{1}{\mu_y(A_y)}\int_{A_y}s(x,y)d\mu_y(x),$$
in case $\mu_y(A_y)>0$. Otherwise we can set $\varphi(y)$ equal to any arbitrary selection of $y\mapsto C_y$.
Indeed, we see that
\begin{eqnarray*}
\int_A s dm  &=& \int_K\left(\int_{A_y}s(x,y)d\mu_y(x)\right) dy+
\int_{K^c}\left(\int_{A_y}s(x,y)d\mu_y(x)\right) dy=
\\ &=& 
\int_K\left(\int_{A_y}s(x,y)d\mu_y(x)\right) dy,
\end{eqnarray*}
where $K$ denotes the set of all $y\in [0,1]$ such that $\mu_y(A_y)>0$.
Then, setting $g(x,y)=\varphi(y)$, we have
$$\int_A g(x,y) dm=\int_0^1\varphi(y)\mu_y(A_y) dy$$
thanks to Corollary \ref{fubini1}, and so
$$\int_ A g(x,y) dm= \int_K \varphi(y)\mu_y(A_y) dy=
\int_K\left(\int_{A_y}s(x,y)d\mu_y(x)\right) dy=\int_A s dm.$$
It only remains to show that $\varphi(y)\in C_y$, for each $y\in K$. 
To this aim, fix $y\in K$ and assume by contradiction that the quantity
$\varphi(y)$ does not belong to $C_y$. Then, by the Separation Theorem,
there exist a positive element $p\in \erre^n$ and a real number $a$ such that 
$$p\cdot \varphi(y)<a,\ \ \ \inf\{p\cdot z: z\in C_y\}\geq a.$$
But then, by subadditivity, we get
$$a>p\cdot \varphi(y)=\sum_{i=1}^np_i\dfrac{1}{\mu_y(A_y)}\int_{A_y}s_i(x,y)d\mu_y(x)
\geq\dfrac{1}{\mu_y(A_y)} \int_{A_y}\sum_{i=1}^np_is_i(x,y)d\mu_y(x).$$
Now, for every $x\in X$ we have $s(x,y)\in C_y$, and so 
$$\sum_{i=1}^np_is_i(x,y)\geq a:$$
from this we deduce
$$ a>p\cdot \varphi(y)\geq\dfrac{1}{\mu_y(A_y)} \int_{A_y}\sum_{i=1}^np_is_i(x,y)d\mu_y(x)\geq \dfrac{1}{\mu_y(A_y)}\int_{A_y}a\ d\mu_y(x)=a.$$
This is clearly absurd, and the assertion is proved.
\end{proof}

Using Lemma \ref{gsemplice} we now prove the main convexity theorem.
\begin{theorem}\label{1.3.1.cond-c}
%\mg{1.3.1.cond-c}
The set ${I_f}$ is convex, when $f$ satisfies Assumption \ref{ass-gamma}.
\end{theorem}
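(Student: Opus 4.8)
The plan is to take two arbitrary points of $I_f$ and exhibit every convex combination of them as another element of $I_f$, reducing the whole construction to a \emph{fibrewise} (pointwise in $y$) convex combination inside each convex set $C_y$. The three tools that have to be lined up are Lemma \ref{gsemplice} (to replace arbitrary selections by sectional ones without changing the integral), Corollary \ref{fubini1} (to write each integral as an iterated one), and Lemma \ref{usotau} (to realize a prescribed measurable density as an actual coalition).

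First I would fix $z_0 = \int_{H_0} s_0\, dm - \lambda(H_0)$ and $z_1 = \int_{H_1} s_1\, dm - \lambda(H_1)$ in $I_f$, with $H_0, H_1 \in \mathcal{H}$ and $s_0, s_1 \in S^*_{\Gamma_f}$, together with an arbitrary $\alpha \in \,]0,1[$ (the cases $\alpha \in \{0,1\}$ being trivial). Applying Lemma \ref{gsemplice} to the pairs $(H_0, s_0)$ and $(H_1, s_1)$, I may assume from the start that $s_0$ and $s_1$ are sectional; write $s_0 = s_0(y)$, $s_1 = s_1(y)$, with $s_0(y), s_1(y) \in C_y$. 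Since $e$ is sectional with section function $\varphi^e$, Corollary \ref{fubini1} gives, for $j=0,1$,
\[
z_j = \int_0^1 \big(s_j(y) - \varphi^e(y)\big)\, a_j(y)\, dy, \qquad a_j(y) := \mu_y(H_{j,y}) \in [0,1].
\]

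Next I would carry out the fibrewise combination. Setting $\tau(y) := \alpha\, a_0(y) + (1-\alpha)\, a_1(y)$, one has $0 \le \tau(y) \le 1$, and I define, on $\{\tau > 0\}$,
\[
s(y) := \frac{\alpha\, a_0(y)}{\tau(y)}\, s_0(y) + \frac{(1-\alpha)\, a_1(y)}{\tau(y)}\, s_1(y),
\]
which is a genuine convex combination of $s_0(y)$ and $s_1(y)$, since the two coefficients are nonnegative and sum to $1$; on $\{\tau = 0\}$ I simply put $s(y) := f(y)$ (note $f(y)\in C_y$ by reflexivity of $\succeq_y$). A direct cancellation of the $\varphi^e(y)$ terms yields the pointwise identity $\big(s(y)-\varphi^e(y)\big)\tau(y) = \alpha\,a_0(y)\big(s_0(y)-\varphi^e(y)\big) + (1-\alpha)\,a_1(y)\big(s_1(y)-\varphi^e(y)\big)$, so that $\int_0^1 \big(s(y)-\varphi^e(y)\big)\tau(y)\, dy = \alpha z_0 + (1-\alpha) z_1$. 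Because each $C_y$ is convex and $s_0(y), s_1(y), f(y) \in C_y$, we get $s(y) \in C_y$ for all $y$, hence $s$ is a sectional selection of $\Gamma_f$; measurability follows from that of $a_0, a_1, s_0, s_1, f$, and Choquet-integrability from the domination $s(y) \le s_0(y)+s_1(y)+f(y)$ together with the monotonicity of the integral and the sectional additivity (\ref{sec-add}), so $s \in S^*_{\Gamma_f}$.

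Finally I would realize $\tau$ geometrically: since $\tau:[0,1]\to[0,1]$ is measurable and $m$ is of convex type, Lemma \ref{usotau} produces $H \in \mathcal{H}$ with $\mu_y(H_y) = \tau(y)\mu_y(X) = \tau(y)$ for all $y$ (using $\mu_y(X)=1$). Applying Corollary \ref{fubini1} once more, to $s$ and to $e$, gives $\int_H s\, dm - \lambda(H) = \int_0^1 \big(s(y)-\varphi^e(y)\big)\mu_y(H_y)\, dy = \alpha z_0 + (1-\alpha) z_1$, so this convex combination lies in $I_f$. I expect the one delicate point to be the degenerate fibres $\{\tau(y)=0\}$: there one must check that the right-hand integrand also vanishes — which it does, since for $\alpha\in\,]0,1[$ the condition $\tau(y)=0$ forces $a_0(y)=a_1(y)=0$ — and that the selection can still be chosen measurably, which is handled by the constant choice $s(y)=f(y)$. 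The remaining verifications (measurability and integrability of the constructed $s$) are routine and are exactly where the subadditivity hypothesis and Remark \ref{add} enter.
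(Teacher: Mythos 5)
Your proof is correct and follows essentially the same route as the paper's: both reduce to sectional selections via Lemma \ref{gsemplice}, pass to iterated integrals via Corollary \ref{fubini1}, form exactly the same fibrewise convex combination (weighted-average selection $s(y)$ with summed density $\tau(y)$), and realize $\tau$ as an actual coalition via Lemma \ref{usotau}. The only differences are cosmetic: the paper first establishes the identification $I_f=\widehat{\int}_0^1 \bigl(C_y-e(y)\bigr)\cdot[0,1]\,dy$ and then proves convexity of that Aumann-type integral, whereas you combine two elements of $I_f$ directly; and on the degenerate set $\{\tau=0\}$ the paper puts $s(y)=s_1(y)$ rather than your $s(y)=f(y)$, which avoids having to assume $f$ itself is Choquet integrable.
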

\begin{proof}
 Thanks to Lemma \ref{gsemplice}, in the definition of $I_f$ the mapping $s$ can be taken sectional, without loss of generality. So we always assume this, and write the values of $s$ just as $s(y)$, subject to the condition $s(y)\in C_y$ for all $y$.
Our aim is to prove that
$$I_f =\widehat{\int}_0^1 I(y)dy,$$
where $I(y)$ is the convex cone $\bigl(C_y-e(y)\bigr)\cdot[0,1]$, and the multivalued integral is meant as follows:
$$\widehat{\int}_0^1 I(y)dy=\left\{\int_0^1 (s(y)-e(y))\tau(y)dy: s\in S^*_{\Gamma_f}, \tau(y)\in [0,1],\ \ s,\tau
 \ {\rm integrable }\right\}.$$
Indeed, if $z\in I$ there exist an integrable selection $s$ and a set $A\in \mathcal{H}$ such that
$$z=\int_A s dm=\int_0^1(s(y)-e(y))\mu_y(A_y)dy.$$
By definition of $m$, the mapping $y\mapsto \mu_y(A_y)$ is measurable, and obviously bounded between $0$ and $\mu_y(X)=1$, so clearly $z\in \displaystyle{\widehat{\int}_0^1I(y)dy}$.
\\
Conversely, let us take an element $w\in \displaystyle{\widehat{\int}_0^1I(y)dy}$: then there exist a measurable selection $s$ and a measurable mapping $\tau:[0,1]\to [0,1]$, such that
$$w=\int_0^1(s(y)-e(y))\tau(y)dy.$$
Thanks to Lemma \ref{usotau}, we see that there exists a measurable set $A\in \mathcal{H}$ such that $\tau(y)=\mu_y(A_y)$ for all $y$, and so
$$w=\int_0^1(s(y)-e(y))\mu_y(A_y)dy=\int_A(s-e) dm\in I_f.$$
This proves also the reverse inclusion. Now, we shall prove that  
$\displaystyle{\widehat{\int}_0^1I(y)dy}$ is convex.
So, take two elements $w_1$ and $w_2$ from $\displaystyle{\widehat{\int}_0^1I(y)dy}$, and fix any positive number $c\in ]0,1[$.
By definition, there exist  measurable selections $s_i$ and measurable mappings $\tau_i:[0,1]\to [0,1]$, $i=1,2$,
such that
$$w_i=\int_0^1(s_i-e_i)(y)\tau_i(y) dy,$$
$i=1,2$. Define
$$K:=\{y\in [0,1]:\tau_1(y)+\tau_2(y)\neq 0\}.$$
Then we have
\begin{eqnarray*}
cw_1+(1-c)w_2  &=& \int_K\left[c(s_1-e)(y)\tau_1(y)+(1-c)(s_2-e)(y)\tau_2(y)\right]dy  =
\\
&=& \int_K(c\tau_1(y)+(1-c)\tau_2(y)) \cdot \left[\dfrac{c\tau_1(y)(s_1(y)-e(y))}{c\tau_1(y)+(1-c)\tau_2(y)}+ \right.
\\
 &+&  \left. 
\dfrac{(1-c)\tau_2(y)(s_2(y)-e(y))}{c\tau_1(y)+(1-c)\tau_2(y)}\right]dy.
\end{eqnarray*}
Let us set
\[\tau(y)=\left\{\begin{array}{ll}
c\tau_1(y)+(1-c)\tau_2(y),&  y\in K\\
0,&  y\notin K\end{array}\right.
\]
and
\[
s(y)=\left\{\begin{array}{ll}
\dfrac{c\tau_1(y)s_1(y)}{c\tau_1(y)+(1-c)\tau_2(y)}+
\dfrac{(1-c)\tau_2(y)s_2(y)}{c\tau_1(y)+(1-c)\tau_2(y)} & \ y\in K\\
& \\
s_1(y) &  y\not\in K.  \end{array} \right.
\]
Then, it is clear that $\tau$ is a measurable mapping with values in $[0,1]$ and $s$ is a measurable function, 
such that $s(y)\in C_y$
for all $y$: moreover, it is clear from the previous calculations that
$$cw_1+(1-c)w_2=\int_0^1(s(y)-e(y))\tau(y)dy\in \widehat{\int}_0^1  I(y)dy.$$
So, we have proved also that the set $\displaystyle{\widehat{\int}_0^1 I(y)dy}$ is convex, which concludes the proof.
\end{proof}

In analogy with our previous notation, let $R(\lambda)$ denote the range of the set function $\lambda$,
namely $R(\lambda ) = \lambda (\mathcal{A})$.

\begin{lemma}\label{separazione}
%\mg{separazione}
If $f\in \LCEs$, there exists $p \in \mathbb{R}_+^n$, $p \neq 0$ such that $p \cdot x \geq 0$ for 
all $x \in \overline{I}_f$.
\end{lemma}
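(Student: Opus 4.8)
The plan is to realise $p$ as the normal of a hyperplane supporting the convex set $\overline{I}_f$ at the origin. First, by Theorem~\ref{1.3.1.cond-c} the set $I_f$ is convex, hence so is $\overline{I}_f$; and choosing $H=\emptyset$ (with $s=f$, which lies in $S^*_{\Gamma_f}$ by reflexivity) gives $0\in I_f$. Since a convex set with nonempty interior admits a supporting hyperplane at each of its boundary points, the whole statement reduces to the single assertion
\[ 0\notin \operatorname{int}(I_f); \]
once this is known, the supporting hyperplane theorem produces $p\neq 0$ with $p\cdot x\ge p\cdot 0=0$ for every $x\in\overline{I}_f$.

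Before attacking that core point I would settle the sign condition $p\in\erre^n_+$, which is cheap. I claim $\erre^n_+\subseteq I_f$: indeed $f(y)\in C_y$, so for any $\xi\in\erre^n_+$ monotonicity~(\ref{leA}.3a) gives $f+\xi\in S^*_{\Gamma_f}$, and taking $H=X^*$ in Corollary~\ref{fubini1} together with the feasibility identity $\int_{X^*}f\,dm=\int_{X^*}e\,dm$ yields $\int_{X^*}(f+\xi)\,dm-\lambda(X^*)=\xi$, so $\xi\in I_f$. In particular $\operatorname{int}(I_f)\neq\emptyset$ (hence $\operatorname{int}(\overline{I}_f)=\operatorname{int}(I_f)$), and any $p$ with $p\cdot x\ge0$ on $\overline{I}_f\supseteq\erre^n_+$ must satisfy $p\cdot\xi\ge0$ for all $\xi\ge0$, i.e.\ $p\in\erre^n_+$.

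It remains to prove $0\notin\operatorname{int}(I_f)$, and this is the only place where the hypothesis $f\in\LCEs$ enters. I would argue by contradiction: if $0\in\operatorname{int}(I_f)$ then $I_f$ contains a strictly negative vector $z\ll0$. By Lemma~\ref{gsemplice} the selection realising $z$ may be taken sectional, so $z=\int_0^1\bigl(s(y)-e(y)\bigr)\tau(y)\,dy$ with $s(y)\in C_y$ and $\tau(y)=\mu_y(H_y)\in[0,1]$. Using Lemma~\ref{usotau} I would turn $\tau$ into the section profile of a coalition $S$, with $\mu_y(S_y)=\tau(y)$ and $m(S)=\int_0^1\tau(y)\,dy>0$, and then construct on $S$ an allocation $g$ that is strictly preferred, $g(\cdot,y)\succ_y f(y)$, while meeting the sectional budget $\int_{S_y}g\,d\mu_y=e(y)\mu_y(S_y)$ for a.e.\ $y$; such a pair $(S,g)$ would strongly improve $f$, contradicting $f\in\LCEs$.

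The delicate — and in my view decisive — step is exactly this last construction. The datum $z\ll0$ only provides a surplus $-z\gg0$ that appears \emph{after} integration in $y$, whereas a strong improvement requires the budget balance to hold section by section, with no transfer of resources between sections. Converting a global surplus into section-wise strict improvements is the real obstacle, and I expect it to hinge on the sub-additivity of the $\mu_y$ and the convexity of the sets $C_y$ — precisely the ingredients of Lemma~\ref{gsemplice}, where the normalised sectional Choquet mean of a $C_y$-valued selection was shown to land in $C_y$ via the Separation Theorem. Producing $g$ amounts to running that mean-value argument backwards while preserving the strict preference $\succ_y$, and this is where I would concentrate the effort.
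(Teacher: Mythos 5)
Your architecture coincides with the paper's and is sound as far as it goes: convexity of $I_f$ from Theorem \ref{1.3.1.cond-c}, the observation that $0\in I_f$, the inclusion $\erre^n_+\subseteq I_f$ (the paper establishes $(\mathbb{R}_+^n)^{\circ}\subset I_f$ via the allocation $\psi=z/m(X^*)+f$), which yields both the nonempty interior and the positivity $p\in\erre^n_+$, and finally a separation/supporting-hyperplane argument. The difference in reduction is immaterial: you reduce to ``$0$ is not an interior point of $I_f$,'' while the paper proves the stronger statement $I_f\cap(-\mathbb{R}_+^n)=\{0\}$ and then separates $\overline{I}_f$ from $(-\mathbb{R}_+^n)^{\circ}$. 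But your proposal stops exactly at the step that carries all the weight, namely the implication ``$I_f$ contains a nonzero vector $z$ of $-\mathbb{R}_+^n$ (or $z\ll 0$) $\Rightarrow$ some coalition strongly improves $f$.'' You describe what the improving pair $(S,g)$ should satisfy and where you ``would concentrate the effort,'' but you do not construct it. Since this is the only place where the hypothesis $f\in \LCEs$ enters, what you have is a correct reduction plus a statement of intent, not a proof of the lemma.

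For comparison, the paper's construction at this point is elementary and does not run Lemma \ref{gsemplice} ``backwards'' (no subadditivity of $\mu_y$, no Separation Theorem is invoked there): with $s$ sectional, one writes $z=\int_0^1 z_y\,dy$ with $z_y=(s(y)-e(y))\mu_y(A_y)$, takes $J=\{y: z_y\neq 0\}$, the coalition $A'$ formed by the sections of $A$ lying over $J$, and the allocation $s_0(x,y)=s(y)-z_y/\mu_y(A_y)$; then condition ($i'_2$) of strong improvement holds identically by construction, and ($i_1$) follows from monotonicity (\ref{leA}.3a) because $-z_y/\mu_y(A_y)$ is claimed to lie in $\mathbb{R}_+^n\setminus\{0\}$. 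That said, your instinct that converting an aggregate surplus into sectionwise improvements is ``the real obstacle'' is well founded: the paper's step just quoted rests on the assertion that $z_y\in(-\erre^n_+)\setminus\{0\}$ for each $y\in J$, and this does \emph{not} follow from $\int_0^1 z_y\,dy\in(-\erre^n_+)\setminus\{0\}$, since the sectional defects may have mixed signs that cancel under integration (for instance $z_y$ equal to $(-1,\tfrac12)$ on half of $[0,1]$ and to $(\tfrac12,-1)$ on the other half integrates to $(-\tfrac14,-\tfrac14)\ll 0$ while no single $z_y$ is in $-\erre^2_+$). So the gap in your proposal is genuine --- the lemma is not proved --- but it is not one you could have closed by transcribing the paper verbatim: it is precisely the point at which the paper's own argument is incomplete.
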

\begin{proof}
We shall first prove that
$I_f \cap (-\mathbb{R}_+^n) = \{ 0\}.$
Indeed, assume by contradiction that there exists
$z\in I_f \cap (-\mathbb{R}_+^n)$ with $z\not= 0$. Then there exist a coalition $A\in \mathcal{H}$ and
a Choquet integrable selection $s\in S^1_{\Gamma_f}$ such that
$$z = \int _A sd m- \int _A edm\in (-\mathbb{R}_+^n).$$
Thanks to Lemma \ref{gsemplice} we can and do assume that $s$ is sectional.
Then 
immediately we get $m(A) > 0$ (otherwise both $\displaystyle{\int _A sdm= 0}$ and
$\displaystyle{\int _A edm= 0}$ whence $z = 0$).
We observe that $z=\displaystyle{\int_0^1z_y dy}$, where $z_y=\displaystyle{\int_{A_y}s(x,y)d\mu_y} - e(y)\mu_y(A_y)=
(s(y)-e(y))\mu_y(A_y)$. 
\\
Now, let $J:=\{y: z_y\neq 0\}$. Of course, $\lambda(J)\neq 0$ otherwise $z=0$, and $z=\displaystyle{\int_{J}z_ydy}$. 
Now, for each $y\in J$, we have $\mu_y(A_y)>0$ (otherwise $z_y=0$), and let us define
$A':=\bigcup_{y\in J}A_y$:
the set $A'$ is a measurable subset of $X^*$, since $J$ and $A$ are. Finally let us 
set
$$s_0(x,y):=s(y)-\dfrac{z_y}{\mu_y(A_y)},$$
for all $y\in J$ and $x\in X$.
Since $z_y\in (-\erre^n_+)\setminus \{0\}$ for each $y$, we see that the allocation $s_0$ satisfies 
$s_0(a)\succ_a s(a)\succeq_af(a)$ $\mu$-a.e. in $A'$, and moreover 
$$\int_{A'_y}s_0 d\mu_y=\int_{A'_y}s(y)d\mu_y-z_y=e^y\mu_y(A_y)$$
holds true, for all $y\in J$. 
Moreover, if $y\notin J$, we have by definition $A'_y =\emptyset$, and so $\displaystyle{\int_{A'_y}s_0 d\mu=0=\int_{A'_y} e d\mu}.$
So we have proved that the coalition $A'$ strongly improves $f$ by the allocation $s_0$. But this is impossible, since $f\in \LCEs$.\\
In conclusion $I_f \cap (-\mathbb{R}_+^n) = \{ 0\}$ and hence $\overline{I}_f \cap (-\mathbb{R}_+^n)^o = \emptyset$.
Since both sets are convex, and the second one has non-empty interior, we can  apply the Strong Separation Theorem, and
determine some $p \in \erre ^n$ $p\not= 0$ such that $p \cdot z \geq 0$ for all $z \in \overline{I}_f$.
\\
It only remains to prove that
$p\in \mathbb{R}_+^n.$
Indeed, we have that $(\mathbb{R}_+^n)^o \subset I_f;$ in fact if $z\in (\mathbb{R}_+^n)^o$, then the allocation
 $\displaystyle{\psi =
\dfrac{z}{m(X^*)} + f}$ is in $S^*_{\Gamma_f}$ and
\begin{eqnarray*}
\int _{X^*} \psi dm- \int _{X^*} e dm= \int _{X^*} fdm+ z - \int _{X^*} edm= z
\end{eqnarray*}
since $f$ is feasible. Then  
$p\cdot z \geq 0$ for every $z\in (\mathbb{R}_+^n)^o,$ whence necessarily
$p\in \mathbb{R}_+^n$. 
\end{proof}

We shall now prove that:
\begin{lemma}\label{strassen} %\mg{strassen}
{\rm \bf (Strassen)}
If $f$ is sectional and $f \in \LCEs$, then $p\cdot e(a)\leq p\cdot f(a)$  $m$-a.e., where $p$ is as in Lemma {\rm \ref{separazione}}
\end{lemma}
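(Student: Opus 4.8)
The plan is to use that the candidate allocation $f$ is itself an integrable selection of $\Gamma_f$, to plug it into the set $I_f$, and then to transfer the resulting vector inequality (coming from the separating functional $p$) into a scalar Choquet-integral inequality over all coalitions, so that the faithfulness property $(c_2)$ yields the pointwise estimate.

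First I would verify that $f\in S^*_{\Gamma_f}$. Each $\succeq_a$ is a preorder, hence reflexive, so $f(a)\succeq_a f(a)$ and therefore $f(a)\in C_y=\Gamma_f(a)$ for $m$-almost every $a$; since $f$ is a feasible allocation it belongs to $L^1_C(m,\erre^n_+)$, whence $f\in S^*_{\Gamma_f}$. Taking $s=f$ in the definition (\ref{I-f}) of $I_f$, it follows that for every $H\in\Hs$ the vector $\int_H f\,dm-\lambda(H)=\int_H f\,dm-\int_H e\,dm$ lies in $I_f\subset\overline{I}_f$.

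By Lemma \ref{separazione} the functional $p$ satisfies $p\cdot x\geq 0$ on $\overline{I}_f$, so $p\cdot\int_H f\,dm\geq p\cdot\int_H e\,dm$ for every $H\in\Hs$. At this point I would exploit that both $f$ and $e$ are sectional. Indeed $p\cdot f$ and $p\cdot e$ are again sectional, with section functions $p\cdot f(y)$ and $p\cdot e(y)$, so Corollary \ref{fubini1} gives $p\cdot\int_H f\,dm=\int_0^1 \bigl(p\cdot f(y)\bigr)\mu_y(H_y)\,dy=\int_H(p\cdot f)\,dm$ and, in the same way, $p\cdot\int_H e\,dm=\int_H(p\cdot e)\,dm$. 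Hence the vector inequality becomes the scalar inequality $\int_H(p\cdot e)\,dm\leq\int_H(p\cdot f)\,dm$, valid for all $H\in\Hs$.

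Finally I would invoke the faithfulness property $(c_2)$: from $\int_A(p\cdot e)\,dm\leq\int_A(p\cdot f)\,dm$ for every $A\in\Hs$ one concludes $p\cdot e\leq p\cdot f$ $m$-a.e., which is precisely the claim. The only genuinely delicate step is this last passage from an inequality of integrals over every coalition to a pointwise a.e. inequality; it is exactly here that submodularity of $m$ and the stability of the $m$-null ideal under countable unions enter, since these are what guarantee $(c_2)$. Everything preceding it is routine once sectionality is used to pull the fixed vector $p$ through the vector Choquet integral by means of Corollary \ref{fubini1}.
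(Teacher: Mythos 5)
Your proof is correct, but it closes the argument by a different mechanism than the paper does. Both arguments share the same opening: by reflexivity of the preorders, $f\in S^*_{\Gamma_f}$, so for a coalition $H$ the vector $\int_H f\,dm-\int_H e\,dm$ belongs to $I_f$, and sectionality of $f$ and $e$ (via Corollary \ref{fubini1} and Remark \ref{add}) lets the separating functional $p$ of Lemma \ref{separazione} pass through the vector Choquet integral. From there you quantify over \emph{all} $H\in\mathcal{H}$, obtain $\int_H (p\cdot e)\,dm\leq \int_H (p\cdot f)\,dm$ for every coalition, and invoke the faithfulness axiom $(c_2)$ to get the pointwise a.e.\ inequality. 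The paper instead argues by contradiction with a \emph{single} well-chosen coalition: it sets $B=\{y\in[0,1]:\ p\cdot f(y)<p\cdot e(y)\}$, observes that failure of the conclusion forces $\lambda(B)>0$, hence $0<\int_B (p\cdot e-p\cdot f)(y)\,dy=\int_{X\times B}p\cdot(e-f)\,dm$ by strict positivity of the Lebesgue integral of a strictly positive integrand, and concludes that $z=\int_{X\times B}f\,dm-\int_{X\times B}e\,dm$ is an element of $I_f$ with $p\cdot z<0$, contradicting Lemma \ref{separazione}. In particular the paper never uses $(c_2)$ in this lemma: the passage from integral inequalities to pointwise values is done by hand, exploiting that $p\cdot f$ and $p\cdot e$ are sectional, so the ``bad set'' is a cylinder $X\times B$ on which ordinary Lebesgue theory applies. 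Your route is shorter and direct, and it is legitimate because $(c_2)$ is a standing hypothesis of the economic model (the paper itself uses it in Theorem \ref{finale} and Proposition \ref{cor-finale}); what the paper's route buys is independence of this lemma from the submodularity and null-ideal assumptions underlying $(c_2)$, at the price of a contradiction argument tailored to the sectional structure.
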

\begin{proof}
  Let $B:=\{y\in [0,1]: p\cdot f(y)<p\cdot e(y)\}$. Since
$\mu_y (X) = m(X^*) = 1$ and 
 $f$ and $e$ are sectional, we have
$$0<\int_B(p\cdot e-p\cdot f)(y)dy=\int_{X\times B}(p\cdot e-p\cdot f) dm.$$
So, by Corollary \ref{fubini1} and Remark \ref{add}, it follows
\begin{eqnarray*}
0 &<&\int_{X\times B} p\cdot (e-f) dm=
p\cdot \int_{X\times B}e dm -p\cdot \int_{X\times B} f dm=-p\cdot z,\end{eqnarray*}
where $z=\int_{X\times B} f dm  -\int_{X\times B} e dm$ is an element of $I_f$. Therefore $p\cdot z<0$, which is in contrast with Lemma \ref{separazione}.  
\end{proof}

\begin{theorem}\label{finale}
%\mg{finale}
Under the previous assumptions, 
if $f$ is a  sectional allocation,
then $f \in \LCEs$ if and only if $f \in \WEs$.
\end{theorem}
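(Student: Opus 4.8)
The plan is to prove the two implications separately; the direction $f\in\WEs\Rightarrow f\in\LCEs$ is almost free, while the converse carries the whole weight. For the easy direction, note that under Assumption \ref{leA} Theorem \ref{WESCES} gives $\WEs\subset\CEs$, and since the large core always contains the core ($\CEs\subset\LCEs$), one obtains at once $f\in\WEs\Rightarrow f\in\LCEs$; sectionality of $f$ plays no role here. For the converse, assume $f$ is sectional and $f\in\LCEs$. First I would fix the candidate price: Lemma \ref{separazione} furnishes $p\in\mathbb{R}_+^n\setminus\{0\}$ with $p\cdot z\geq 0$ for every $z\in\overline{I}_f$. Condition $(w_1)$ holds because $f$ is feasible by hypothesis, so everything reduces to checking the maximality condition $(w_2)$ for this $p$.

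The next step is the budget identity. Strassen's Lemma \ref{strassen} gives $p\cdot e(a)\leq p\cdot f(a)$ $m$-a.e., while feasibility $\int_{X^*}f\,dm=\int_{X^*}e\,dm$ together with Proposition \ref{perscalari} yields $\int_{X^*}p\cdot f\,dm=\int_{X^*}p\cdot e\,dm$. Since $f$ and $e$ are sectional and $\mu_y(X)=1$, Corollary \ref{fubini1} rewrites these two Choquet integrals as $\int_0^1 (p\cdot f)(y)\,dy$ and $\int_0^1 (p\cdot e)(y)\,dy$; a function dominating another with the same integral must coincide with it a.e., so $p\cdot f(a)=p\cdot e(a)$ $m$-a.e., and in particular $f(a)\in B_p(a)$.

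I would then establish expenditure minimization section by section. For any \emph{sectional} selection $s\in S^*_{\Gamma_f}$ (legitimate by Lemma \ref{gsemplice}) and any measurable $B\subset[0,1]$, the vector $\int_{X\times B}(s-e)\,dm=\int_B\bigl(s(y)-e(y)\bigr)\,dy$ belongs to $I_f$, whence $\int_B p\cdot\bigl(s(y)-e(y)\bigr)\,dy\geq 0$; as $B$ is arbitrary, $p\cdot s(y)\geq p\cdot e(y)$ for a.e.\ $y$. To pass to the pointwise statement ``for a.e.\ $y$, $p\cdot z\geq p\cdot e(y)$ for \emph{every} $z\in C_y$'' I would exchange the quantifiers through a Castaing representation of the closed convex multifunction $y\mapsto C_y$: running the inequality over a countable dense family of selections and intersecting the resulting null sets shows that, for a.e.\ $y$, the point $f(y)$ minimizes $p\cdot z$ over $C_y=\{z:z\succeq_y f(y)\}$, with minimum value $p\cdot e(y)=p\cdot f(y)$.

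Finally I would upgrade this to the strict maximality required by $(w_2)$: if $x\succ_y f(y)$ then $p\cdot x>p\cdot e(y)$. Minimization already gives $p\cdot x\geq p\cdot e(y)$; suppose equality held. Because $e(y)\in(\mathbb{R}_+^n)^{\circ}$ we have $p\cdot e(y)>0$, so a strictly cheaper bundle exists (e.g.\ the origin), and along the segment $x_\theta=\theta x$ one has $p\cdot x_\theta<p\cdot e(y)$ while $x_\theta\to x$; continuity of $\succ_y$ in the sense of (\ref{leA}.3b), ensuring that strict preference to $f(y)$ survives small perturbations of $x$, would force $x_\theta\succ_y f(y)$ for some $\theta<1$, contradicting the minimization just proved. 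Hence the inequality is strict, $(f,p)$ satisfies $(w_1)$ and $(w_2)$, and $f\in\WEs$. I expect this last step --- turning expenditure minimization into genuine preference maximization --- to be the main obstacle: it is precisely the classical ``cheaper point'' argument, and it is here that the continuity of the preferences and the positivity of income $p\cdot e(y)>0$ (coming from $e(y)\gg 0$) are indispensable. A secondary technical point is the measurable-selection/Castaing argument used to swap ``for all selections, a.e.\ $y$'' into ``a.e.\ $y$, for all bundles,'' where integrability of the selections must be arranged by truncation, since only the cheapest bundles of $C_y$ are relevant.
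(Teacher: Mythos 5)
Your proposal is correct and follows essentially the same route as the paper's own proof: the easy inclusion via Theorem \ref{WESCES} together with $\CEs\subset\LCEs$, the price $p$ from Lemma \ref{separazione}, the budget identity $p\cdot f=p\cdot e$ $m$-a.e. from Lemma \ref{strassen} plus feasibility, and finally the classical expenditure-minimization/cheaper-point argument to get maximality $(w_2)$. The only differences are cosmetic: where you obtain the a.e. equality by comparing Lebesgue integrals through Corollary \ref{fubini1}, the paper instead applies its condition ($c_2$); and the final step you spell out in detail (quantifier exchange via a Castaing representation, then the $\theta x$ perturbation using $p\cdot e(y)>0$ and continuity) is precisely what the paper delegates to Hildenbrand's Theorem 2.1.1, so your elaboration—including your remark that the stated form of continuity in (\ref{leA}.3b) is what must carry that step—fills in rather than departs from the paper's argument.
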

\begin{proof}
We have already seen that $\WEs\subset \CEs$, thanks to Theorem \ref{WESCES}.
To prove the converse inclusion fix
 $f \in \LCEs$. 
By Lemma \ref{strassen}  $m$
a.e. in $X^*, p \cdot e(a) \leq p \cdot f(a)$.
 We shall now prove that the previous inequality is in fact
an equality. 
For $A \subset X^*$, being $f$ feasible and since $p\cdot f -p\cdot e \geq 0$ $m$-a.e., we derive  by Proposition \ref{perscalari} and (\ref{sec-add})
\begin{eqnarray*}
 0\leq  \int_A p\cdot( f - e) d m \leq \int_{X^*}  p\cdot( f - e) dm 
= p\cdot \left(\int_{X^*}fd\mu-\int_{X^*}e dm \right).
\end{eqnarray*}
Applying ($c_2$)  (pag. \pageref{c-2}), 
we  get $p\cdot f = p\cdot e$
 $m$-a.e. in $X$.\\
The remaining part of the proof is exactly the same as that of
\cite[Theorem 2.1.1, pag 133 ff]{Hilde}  
since preferences are
assumed to be monotone and continuous.  
\end{proof}

Assume now that the preferences have the following structure:
\begin{itemize}
\item for every $y\in [0,1]$, there exists a subset $J_y$ of  $\{1, 2, \ldots, n\}$ such that:
\begin{itemize}
\item[i)] for every $u, v \in \erre^n_+$, $u \succ_{a=(x,y)} v \Longleftrightarrow u_j > v_j, j \in J_y$;
\item[ii)] For every $j\in \{1,...,n\}$, the set $A_j:=\{y\in [0,1]:j\in J_y\}$ is measurable.
\end{itemize}
This means that within each coalition $E_k$ only the items of the $k$-th list $J_k$ are considered, 
in order to decide whether a bundle is preferred to another. Observe that such assumption does not 
fulfill monotonicity, in the sense of (\ref{leA}.3b), but it satisfies the more demanding form
\item for every $x \in \mathbb{R}_+^n$,  $z \in (\mathbb{R}_+^n)^0$, then $x + z \succ_a x$ for every $a \in X$.
\end{itemize}
However Lemma \ref{separazione} remains true: one has only to note that $I \cap (-\mathbb{R}_+^n)^0 = \emptyset$ 
with the same proof.

\begin{proposizione}\label{cor-finale}
%\mg{cor-finale}
Under Assumption \ref{leA}, $e  \in \WEs$.
\end{proposizione}
\begin{proof}
 The proof is analogous to that of \cite[Proposition 3.13]{primofuzzyar} when we apply Theorem \ref{finale}.
It is enough to prove that $e  \in \CEs$ and then apply Theorem \ref{finale}. Assume by contradiction that
$e  \not\in \CEs$; then there exists a pair $(f,S)$ that improves $e$, namely
\begin{itemize}
\item[\ref{cor-finale}.a)] $f \succ_a e$, when $a \in S$;
\item[\ref{cor-finale}.b)] $\iS f dm= \iS e dm$.
\end{itemize}
We note that, from (i) and (ii) above, it follows that there exists $k\in \{1,...,n\}$ such that $\lambda(A_k)>0$.
From \ref{cor-finale}.a), we have for the $k$-th entries of $f$ and $e$,
$f_k(a) > e_k(a), a \in S$. Hence by ($c_2$), there holds
$$ \iS f_k dm> \iS e_k dm$$
that contradicts \ref{cor-finale}.b). 
\end{proof}
\section*{Conclusions}
The Choquet integral over a product space $X^*$ and with values in $\mathbb{R}_+^n$ has been studied with respect to a fuzzy measure.
Under suitable conditions a Fubini theorem is obtained  and these results are used to find equilibria in a pure exchange economy
$ {\mathcal E}=\{ (X^*,\mathcal{H},m);~ \mathbb{R}_+^n;~ e;~~\{\succ_a\}_{a\in X^*} \},$ where 
  the space of agents  is a triple $(X^*, \mathcal{H}, m),$ with $(X^*, \mathcal{H})$
a measurable space and $m$ is a fuzzy measure of convex type.
\\
If the target space is infinite dimensional  vector lattices are candidates for the space of goods.
 In this framework one could  consider also the methods of integration given in \cite{bms13,SC,bcs2015,dpm,cs2014,cs2015,ds}.
\section*{Acknowledgement}
This research was partially supported by GNAMPA -- INDAM (Italy)
 Grant  N.  U UFMBAZ2017/0000326, by University of Perugia --
Dept. of Mathematics and Computer Sciences -- Grant Nr 2010.011.0403 and Grant APVV-14-0013.\\
%==========================================

\end{document}